\documentclass[12pt, reqno]{amsart}
\usepackage{amsmath, amsthm, amscd, amsfonts, amssymb, graphicx, color}
\usepackage[bookmarksnumbered, colorlinks, plainpages]{hyperref}
\hypersetup{colorlinks=true,linkcolor=red, anchorcolor=green, citecolor=cyan, urlcolor=red, filecolor=magenta, pdftoolbar=true}

\textheight 22.5truecm \textwidth 14.5truecm
\setlength{\oddsidemargin}{0.35in}\setlength{\evensidemargin}{0.35in}

\setlength{\topmargin}{-.5cm}

\newtheorem{theorem}{Theorem}[section]
\newtheorem{lemma}[theorem]{Lemma}

\newtheorem{cor}[theorem]{Corollary}
\theoremstyle{definition}

\theoremstyle{remark}
\newtheorem{remark}[theorem]{\bf{Remark}}
\numberwithin{equation}{section}
\begin{document}

\title [Refinements of norm and numerical radius inequalities ] {  Refinements of norm and numerical radius inequalities  }

\author[P. Bhunia and K. Paul]{Pintu Bhunia and Kallol Paul}

\address{(Bhunia) Department of Mathematics, Jadavpur University, Kolkata 700032, West Bengal, India}
\email{pintubhunia5206@gmail.com}

\address{(Paul) Department of Mathematics, Jadavpur University, Kolkata 700032, West Bengal, India}
\email{kalloldada@gmail.com;kallol.paul@jadavpuruniversity.in}

\thanks{First author  would like to thank UGC, Govt. of India for the financial support in the form of SRF. Prof. Kallol Paul would like to thank RUSA 2.0, Jadavpur University for the partial support.}
\thanks{}
\thanks{}


\subjclass[2010]{ 47A12, 47A30.}
\keywords{ Numerical radius,  norm inequality, Hilbert space, Bounded linear operator, operator convex function.}

\maketitle

\begin{abstract}
Several refinements of norm and numerical radius inequalities of bounded linear operators on a complex Hilbert space are given. In particular, we show that if $A$ is a bounded linear operator on a complex Hilbert space, then
$$	\frac{1}{4}\|A^*A+AA^*\| 
\leq  \frac{1}{8}\bigg( \|A+A^*\|^2+\|A-A^*\|^2 +c^2(A+A^*)+c^2(A-A^*)\bigg) \leq w^2(A)$$
and 
\begin{eqnarray*}
	\frac{1}{2}\|A^*A+AA^*\| -  \frac{1}{4}\bigg\|(A+A^*)^2 (A-A^*)^2 \bigg\|^{1/2} \leq w^2(A)  \leq 	\frac{1}{2}\|A^*A+AA^*\|,
\end{eqnarray*}
where $\|.\|$, $w(.)$ and $c(.)$ are the operator norm, the numerical radius and the Crawford number, respectively. 
Further, we prove that  if $A,D$ are bounded linear operators on a complex Hilbert space, then
\begin{eqnarray*}
\|AD^*\| \leq  \left\| \int_0^1  \left( (1-t) \left(\frac{ |A|^2+|D|^2}{2}\right) +t\|AD^*\|I \right)^2dt  \right\|^{1/2} \leq  \frac{1}{2}\left\| |A|^2+|D|^2 \right\|,
\end{eqnarray*}
where $|A|=(A^*A)^{1/2}$ and $|D|=(D^*D)^{1/2}$.
This is a refinement of well known inequality obtained by Bhatia and Kittaneh.



\end{abstract}

\section{\textbf{Introduction and preliminaries}}

\noindent Let  $ \mathbb{H}$ denote a complex Hilbert space with inner product  $\langle.,.\rangle$ and $ \|.\|$ denotes the norm induced by the inner product $\langle.,.\rangle$.  Let $ \mathcal{B}(\mathbb{H}) $ denote the collection of all bounded linear operators on $ \mathbb{H}.$ 
For  $A \in \mathcal{B}(\mathcal{H})$, $A^*$ denotes the adjoint of $A$ and $|A|=\sqrt{A^*A}$. For  $A \in \mathcal{B}(\mathcal{H})$, let $\|A\|$ be the operator norm of $A$. Recall that 
\[\|A\|:=\sup_{\|x\|=1}\|Ax\|=\sup_{\|x\|=\|y\|=1}|\langle Ax,y\rangle|.\] The numerical range of $A$, denoted by $W(A),$ is defined as  \[W(A):=\big\{ \langle Ax,x\rangle~~:~~x\in \mathcal{H}, ~~\|x\|=1 \big\}.\]
The  two numerical constants, numerical radius $w(A)$  and  Crawford number $c(A)$,  associated with the numerical range  of $A$,  are defined respectively as 
\begin{eqnarray*}
w(A):= \sup_{\lambda \in W(A)} |\lambda| 
~~\,\,\,\,\mbox{and}~~\,\,\,\,
c(A):=\inf_{\lambda \in W(A)} |\lambda|.
\end{eqnarray*}
The numerical radius is a norm on $\mathcal{B}(\mathbb{H}) $ satisfying the following inequality 
\begin{eqnarray}\label{eqv}
\frac{\|A\|}{2}\leq w(A)\leq \|A\|.
\end{eqnarray}
Clearly, (\ref{eqv}) implies that the numerical radius norm is equivalent to the operator norm. The inequality  (\ref{eqv}) is sharp,  $w(A) = \|A\|$ if $AA^*=A^*A$ and $ w(A)=\frac{\|A\|}{2} $ if $A^2=0.$ 

\noindent Various refinemets of (\ref{eqv}) have been obtained, we refer to \cite{bag, BBP3, BPN,OM} and references therein.
Kittaneh \cite[Th. 1]{kittaneh1} improved on the inequality (\ref{eqv}), obtained that
\begin{eqnarray}\label{imp1}
\frac{1}{4}\|A^*A+AA^*\| &\leq&  w^2(A) \leq \frac{1}{2}\|A^*A+AA^*\|.
\end{eqnarray}
In \cite[Th. 1]{K03}, Kittaneh obtained another refinement of the second inequality in (\ref{eqv}), he proved that
\begin{eqnarray}\label{imp2}
 w(A) &\leq&  \frac{1}{2}\||A|+|A^*|\| \leq \frac{1}{2}\|A\|+\frac{1}{2}\|A^2\|^{1/2}.
\end{eqnarray}
Further refinements of (\ref{imp1}) and (\ref{imp2}) have obtained in \cite{BP}, namely,
\begin{eqnarray}\label{imp3}
w^2(A)& \leq &  \min_{0\leq \alpha \leq 1} \left \| \alpha A^*A +(1-\alpha)AA^* \right \|
\end{eqnarray} 
and
\begin{eqnarray}\label{imp4}
w^{2}(A)&\leq &\min \{ \gamma_1, \gamma_2\},
\end{eqnarray}
 \mbox{where }
$$ \gamma_1 = \min_{0\leq \alpha \leq 1} \left\| \alpha   \left(\frac{|A|+|A^*|}{2}\right)^{2}+\left(1-\alpha\right)  |A^*|^{2}  \right\|, $$ 
$$ \gamma_2 = \min_{0\leq \alpha \leq 1} \left\| \alpha   \left(\frac{|A|+|A^*|}{2}\right)^{2}+\left(1-\alpha\right)  |A|^{2}  \right\|.$$\\
In \cite[Th. 2.1]{OM}, Omidvar and Moradi have obtained the following inequality
\begin{eqnarray}\label{moradi}
\frac{1}{4} \|A+A^*\|\|A-A^*\|&\leq &  w^2(A).
\end{eqnarray}

\noindent In this paper, we obtain various norm inequalities for the sum of two bounded linear operators which improve on the triangle inequality. As an application of the norm inequalities we obtain several numerical radius inequalities which improve on the first inequality in (\ref{imp1}). For $A\in \mathcal{B}(\mathcal{H}),$ we prove that
\begin{eqnarray*}
 \frac{1}{4}\|A^*A+AA^*\| &\leq&   \frac{1}{8}\big( \|A+A^*\|^2+\|A-A^*\|^2\big) \\
&  \leq & \frac{1}{8}\big( \|A+A^*\|^2+\|A-A^*\|^2\big) +\frac{1}{8}c^2\big(A+A^*\big)+\frac{1}{8}c^2\big(A-A^*\big)\\
&\leq&  w^2(A).
\end{eqnarray*}
The above inequality is stronger than the inequality (\ref{moradi}). Another refinements of (\ref{moradi}) also given.


\noindent The notion of operator convex function plays an important role in the development of operator and norm inequalties. A real-valued continuous function $f$ on an interval $J$ is said to be operator convex if for all selfadjoint operators $A, D \in \mathcal{B}(\mathcal{H})$ whose spectra are contained in $J$ satisfy $$f \big((1 - t)A + tD\big) \leq (1 - t) f(A) + tf(D)$$ for all $t \in [0, 1]$. The function $f(t) = t^r$ is operator convex on $[0,\infty)$ if either $1\leq r\leq 2$ or $-1\leq r\leq 0$. 
In \cite{BK}, Bhatia and Kittaneh have obtained a norm inequality,  for $A,D\in \mathcal{B}(\mathcal{H})$, 
\begin{eqnarray}\label{300c}
	\|AD^*\| &\leq&   \frac{1}{2}\left\| A^*A+D^*D \right\|.
\end{eqnarray}
We obtain a refinement of the above inequality to show that 
\begin{eqnarray*}
f(\|AD^*\|) &\leq&  \left\| \int_0^1 f  \left( (1-t)\left(\frac{|A|^2+|D|^2}{2}\right) +t\|AD^*\|I \right)dt  \right\| \\
&\leq&  \left\| f\left(\frac{A^*A+D^*D}{2}\right) \right\|.
\end{eqnarray*}
In particular, considering $ f(t) = t^2,$ we get
\begin{eqnarray*}
\|AD^*\| &\leq & \frac{1}{\sqrt{3}}\left\| \left(\frac{|A|^2+|D|^2}{2}\right)^2 +\|AD^*\|^2I + \|AD^*\|\left(\frac{|A|^2+|D|^2}{2}\right) \right\|^{1/2} \\
&\leq & \frac{1}{2}\left\| A^*A+D^*D \right\|.
\end{eqnarray*}
This is a non-trivial improvement of Bhatia and Kittaneh's inequality (\ref{300c}).
In \cite{BK2}, Bhatia and Kittaneh have obtained that if $A,D\in \mathcal{B}(\mathcal{H})$ be positive then
\begin{eqnarray}\label{bhatia}
\|AD\| &\leq & \frac{1}{4} \|A+D\|^2.
\end{eqnarray}
We obtained a refinement of the above inequality (\ref{bhatia}).
The refinements of (\ref{imp3}) and (\ref{imp4}) are also obtained.

 \section{\textbf{Norm inequalities in estimating  lower bound of numerical radius} }

\noindent We begin this section  with the introduction of  two notations. Let $A=B+{\rm i} C$ be the Cartesian decomposition of $A$ where $B=\textit{Re}(A):=\frac{A+A^*}{2}$ and $C=\textit{Im}(A):=\frac{A-A^*}{2{\rm i}}.$ We observe that 
\begin{eqnarray}\label{eql1}
\frac{1}{4}\|A^*A+AA^*\|= \frac{1}{2}\|B^2+C^2\|.
\end{eqnarray}
By using the identity (\ref{eql1}), we obtain our first refinement.

\begin{theorem}\label{th1}
Let $A\in \mathcal{B}(\mathcal{H}).$ Then
\begin{eqnarray*}
\frac{1}{4}\|A^*A+AA^*\| &\leq & \frac{1}{8}\big( \|A+A^*\|^2+\|A-A^*\|^2\big) \\
&\leq & \frac{1}{8}\big( \|A+A^*\|^2+\|A-A^*\|^2\big) +\frac{1}{8}c^2\big(A+A^*\big)+\frac{1}{8}c^2\big(A-A^*\big)\\
&\leq& w^2(A).
\end{eqnarray*}

\end{theorem}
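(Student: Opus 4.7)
The strategy is to exploit the Cartesian decomposition $A = B + \mathrm{i} C$ with $B = (A+A^*)/2$ and $C = (A-A^*)/(2\mathrm{i})$. Under this decomposition the three quantities in the chain become comparable: $\|A+A^*\|^2 = 4\|B\|^2$, $\|A-A^*\|^2 = 4\|C\|^2$, and $A^*A + AA^* = 2(B^2+C^2)$ as already recorded in \eqref{eql1}. Moreover, since $B$ and $C$ are self-adjoint, their numerical ranges lie on the real axis, so by homogeneity of the Crawford number one has $c^2(A+A^*) = 4c^2(B)$ and $c^2(A-A^*) = 4c^2(C)$. The problem thus reduces to showing
\begin{equation*}
\tfrac{1}{2}\|B^2+C^2\| \;\leq\; \tfrac{1}{2}(\|B\|^2+\|C\|^2) \;\leq\; \tfrac{1}{2}(\|B\|^2+\|C\|^2) + \tfrac{1}{2}(c^2(B)+c^2(C)) \;\leq\; w^2(A).
\end{equation*}

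The first inequality is immediate: since $B^2$ and $C^2$ are positive operators, the triangle inequality for the operator norm together with the identity $\|B^2\|=\|B\|^2$ and $\|C^2\|=\|C\|^2$ yields $\|B^2+C^2\|\leq \|B\|^2+\|C\|^2$. The middle inequality is trivial because the Crawford number is nonnegative.

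For the substantive third inequality, the crucial ingredient is the pointwise identity
\begin{equation*}
|\langle Ax,x\rangle|^2 = \langle Bx,x\rangle^2 + \langle Cx,x\rangle^2 \qquad (\|x\|=1),
\end{equation*}
which holds because $\langle Bx,x\rangle, \langle Cx,x\rangle \in \mathbb{R}$. I would pick a sequence of unit vectors $x_n$ such that $|\langle Bx_n,x_n\rangle| \to \|B\|$ (possible because $\|B\| = w(B)$); for every $n$ the bound $|\langle Cx_n,x_n\rangle| \geq c(C)$ holds by definition of $c(C)$, so
\begin{equation*}
w^2(A) \geq \langle Bx_n,x_n\rangle^2 + \langle Cx_n,x_n\rangle^2 \geq \langle Bx_n,x_n\rangle^2 + c^2(C),
\end{equation*}
giving $w^2(A) \geq \|B\|^2 + c^2(C)$ in the limit. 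Swapping the roles of $B$ and $C$ yields $w^2(A) \geq \|C\|^2 + c^2(B)$. Averaging these two bounds produces exactly $w^2(A) \geq \tfrac{1}{2}(\|B\|^2+\|C\|^2) + \tfrac{1}{2}(c^2(B)+c^2(C))$, which after the substitutions above is the desired third inequality.

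The only subtle point, and where I would be most careful, is the role of the supremum/infimum: one must not try to realize $\|B\|$, $\|C\|$, $c(B)$, $c(C)$ simultaneously on a single unit vector (which is generally impossible). Splitting the argument into two symmetric halves that use one extremum sequence at a time, and then averaging, sidesteps this obstacle and is what makes the Crawford-number correction term legitimately appear on the left-hand side.
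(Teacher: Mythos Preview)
Your proposal is correct and follows essentially the same route as the paper: the Cartesian decomposition, the identity \eqref{eql1}, the pointwise relation $|\langle Ax,x\rangle|^2=\langle Bx,x\rangle^2+\langle Cx,x\rangle^2$, and the two symmetric bounds $w^2(A)\ge\|B\|^2+c^2(C)$ and $w^2(A)\ge\|C\|^2+c^2(B)$ averaged together. The only cosmetic difference is that you pass explicitly to a norming sequence for $B$ (respectively $C$), whereas the paper writes the resulting inequalities directly; the substance is identical.
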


\begin{proof}
From the identity (\ref{eql1}) we get,
\begin{eqnarray*}
\frac{1}{4}\|A^*A+AA^*\| = \frac{1}{2}\|B^2+C^2\| \leq  \frac{1}{2}(\|B\|^2+\|C\|^2)=\frac{1}{8}( \|A+A^*\|^2+\|A-A^*\|^2).
\end{eqnarray*}
This is the first inequality of the theorem. The second inequality follows trivially. Now we prove the third inequality. Let $x\in \mathcal{H}$ with $\|x\|=1$. Then from the Cartesian decomposition of $A$ we get,
\begin{eqnarray}\label{eqn1}
|\langle Bx,x\rangle|^2+|\langle Cx,x\rangle|^2=|\langle Ax,x\rangle|^2.
\end{eqnarray}
From (\ref{eqn1}), we get the following two inequalities
\begin{eqnarray}\label{eqn2}
c^2(B)+\|C\|^2\leq w^2(A)
\end{eqnarray}
and 
\begin{eqnarray}\label{eqn3}
c^2(C)+\|B\|^2\leq w^2(A).
\end{eqnarray}
It follows from the inequalities (\ref{eqn2}) and (\ref{eqn3}) that 
\begin{eqnarray*}
c^2(B)+c^2(C)+\|B\|^2+\|C\|^2 \leq 2 w^2(A).
\end{eqnarray*}
This implies that 
\begin{eqnarray*}
\frac{1}{8}\bigg( \|A+A^*\|^2+\|A-A^*\|^2\bigg) +\frac{1}{8}c^2\big(A+A^*\big)+\frac{1}{8}c^2\big(A-A^*\big)\leq w^2(A).
\end{eqnarray*}
This completes the proof.
\end{proof}




\begin{remark} 
We note that the  inequalities obtained  in Theorem \ref{th1} refine  the inequality (\ref{moradi}) obtained by Omidvar and Moradi \cite[Th. 2.1]{OM} and  the  inequality (\ref{imp1}) obtained by Kittaneh \cite[Th. 1]{kittaneh1}. 
 
\end{remark}

The following corollary follows from Theorem \ref{th1}. 

\begin{cor}\label{cor50}
Let $A\in \mathcal{B}(\mathcal{H})$. Then
\begin{eqnarray}\label{weak}
\frac{1}{4}\|A^*A+AA^*\|+\frac{1}{8}c^2\big(A+A^*\big)+\frac{1}{8}c^2\big(A-A^*\big)\leq w^2(A).
\end{eqnarray}
\end{cor}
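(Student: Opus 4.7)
The plan is to simply splice together the two non-trivial inequalities already established in Theorem \ref{th1}, since the corollary adds nothing new beyond a clean restatement.

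Specifically, Theorem \ref{th1} gives me two facts I will use. First, it says
\[
\frac{1}{4}\|A^*A+AA^*\| \;\leq\; \frac{1}{8}\bigl(\|A+A^*\|^2+\|A-A^*\|^2\bigr),
\]
which came from applying the triangle inequality to $B^2+C^2$ after invoking the identity $\tfrac14\|A^*A+AA^*\|=\tfrac12\|B^2+C^2\|$. Second, it says
\[
\frac{1}{8}\bigl(\|A+A^*\|^2+\|A-A^*\|^2\bigr)+\frac{1}{8}c^2(A+A^*)+\frac{1}{8}c^2(A-A^*)\;\leq\; w^2(A),
\]
which came from the identity $|\langle Bx,x\rangle|^2+|\langle Cx,x\rangle|^2=|\langle Ax,x\rangle|^2$ followed by taking suprema and infima separately over unit vectors.

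My step is to add $\tfrac{1}{8}c^2(A+A^*)+\tfrac{1}{8}c^2(A-A^*)$ to both sides of the first inequality; this produces a quantity on the right-hand side that is exactly the left-hand side of the second inequality. Chaining the two then yields
\[
\frac{1}{4}\|A^*A+AA^*\|+\frac{1}{8}c^2(A+A^*)+\frac{1}{8}c^2(A-A^*)\;\leq\; w^2(A),
\]
which is the desired conclusion.

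There is no real obstacle: this is a formal consequence of Theorem \ref{th1}, not an independent result. The only thing to verify is that the two inequalities I am combining are genuinely the first and third links in the chain stated in Theorem \ref{th1}, so that my ``add and chain'' move is legitimate; this is a matter of inspection rather than calculation.
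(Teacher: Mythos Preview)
Your proposal is correct and matches the paper's own treatment exactly: the paper states only that the corollary ``follows from Theorem~\ref{th1}'' and gives no further argument, which is precisely the add-and-chain move you describe. The paper even remarks afterward that inequality~(\ref{weak}) is weaker than the third inequality in Theorem~\ref{th1}, confirming your reading.
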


It should be mentioned here that the inequality (\ref{weak}) is weaker than the third inequality in Theorem \ref{th1}.\\

In the next theorem we obtain a norm inequality which refines the triangle inequality.

\begin{theorem}\label{th7}
Let $A, D\in \mathcal{B}(\mathcal{H}).$ Then
\begin{eqnarray*}
\|A+D\|^2 &\leq & \|A\|^2+\|D\|^2+ \|A^*D+D^*A\|\leq (\|A\|+\|D\|)^2.
\end{eqnarray*}

\end{theorem}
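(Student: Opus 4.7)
The plan is to reduce both inequalities to the $C^*$-identity $\|T\|^2 = \|T^*T\|$ followed by two applications of the triangle inequality. First, I would expand
$$\|A+D\|^2 = \|(A+D)^*(A+D)\| = \|A^*A + D^*D + A^*D + D^*A\|$$
and split off the cross term via the triangle inequality on $\mathcal{B}(\mathcal{H})$, obtaining
$$\|A+D\|^2 \leq \|A^*A + D^*D\| + \|A^*D + D^*A\|.$$
Since $A^*A$ and $D^*D$ are positive, the triangle inequality gives $\|A^*A + D^*D\| \leq \|A^*A\| + \|D^*D\| = \|A\|^2 + \|D\|^2$, which establishes the first inequality of the theorem.

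For the second inequality, I would bound the cross term directly by
$$\|A^*D + D^*A\| \leq \|A^*D\| + \|D^*A\| \leq 2\|A\|\|D\|,$$
and then add $\|A\|^2 + \|D\|^2$ to both sides to recover $(\|A\|+\|D\|)^2$ on the right.

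There is no substantive obstacle: the proof is essentially an algebraic identity combined with two applications of the triangle inequality. The content of the theorem is rather that the middle quantity genuinely interpolates between $\|A+D\|^2$ and the triangle-inequality bound $(\|A\|+\|D\|)^2$, and that the gain over the classical bound is strict whenever $\|A^*D + D^*A\| < 2\|A\|\|D\|$, which is typical unless $A^*D$ is positive and aligned with $D^*A$.
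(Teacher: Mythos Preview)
Your proof is correct. The paper takes a slightly different route: it works pointwise, expanding $\|(A+D)x\|^2=\|Ax\|^2+\|Dx\|^2+\langle (A^*D+D^*A)x,x\rangle$ for a unit vector $x$, bounding each term by $\|A\|^2$, $\|D\|^2$, and $\|A^*D+D^*A\|$ respectively, and then taking the supremum. You instead invoke the $C^*$-identity $\|T\|^2=\|T^*T\|$ and split at the operator level. Your approach in fact passes through the sharper intermediate bound $\|A+D\|^2\leq \|A^*A+D^*D\|+\|A^*D+D^*A\|$ before weakening $\|A^*A+D^*D\|$ to $\|A\|^2+\|D\|^2$, whereas the paper's vector argument lands directly on $\|A\|^2+\|D\|^2$. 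The second inequality is handled identically in both proofs. Both arguments are equally short and elementary.
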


\begin{proof}
Let $x\in \mathcal{H}$ with $\|x\|=1$. Then we have,
\begin{eqnarray*}
\|(A+D)x\|^2 &=& \langle (A+D)x,(A+D)x\rangle\\
&=& \|Ax\|^2+\|Dx\|^2+\langle (A^*D+D^*A)x,x\rangle\\
&\leq& \|A\|^2+\|D\|^2+\|A^*D+D^*A\|.
\end{eqnarray*}
Taking supremum over $\|x\|=1$ we get the first inequality of the theorem. The second inequality follows from the inequality $\|A^*D+D^*A\|\leq 2\|A\|\|D\|.$
\end{proof}

\begin{remark}
We would like to note that if $\|A+D\|=\|A\|+\|D\|$ then it follows from Theorem \ref{th7} that $\|A^*D+D^*A\|=2\|A\|\|D\|.$ The converse is not true, in general.  Consider $A=I$ and $D=-I$,  then $\|A^*D+D^*A\|=2\|A\|\|D\|$, but $\|A+D\|\neq \|A\|+\|D\|.$

\end{remark}

Next we need the following inequality, known as Buzano's inequality.

\begin{lemma} $($\cite{buzano}$)$\label{lemma1}
Let $x,e,y\in \mathcal{H}$ with $\|e\|=1.$ Then 
\[|\langle x,e\rangle~~\langle e,y\rangle|\leq \frac{1}{2}\left(\|x\|~~\|y\|+|\langle x,y\rangle|\right).\]
\end{lemma}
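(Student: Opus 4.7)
The plan is to reduce to unit vectors by bilinearity of the inner product, and then read the inequality off from the positive semidefiniteness of a single $3\times 3$ Gram matrix, combined with AM--GM.

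First I would normalize: if $x=0$ or $y=0$ the inequality is trivial, so assume both are nonzero. Dividing each side by $\|x\|\,\|y\|$, it suffices to prove
\[
|\langle x,e\rangle\langle e,y\rangle| \;\leq\; \tfrac{1}{2}\bigl(1 + |\langle x,y\rangle|\bigr)
\]
under the standing assumption $\|x\|=\|y\|=\|e\|=1$.

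Next, I would form the Gram matrix
\[
G \;=\; \begin{pmatrix} 1 & \langle y,x\rangle & \langle e,x\rangle \\ \langle x,y\rangle & 1 & \langle e,y\rangle \\ \langle x,e\rangle & \langle y,e\rangle & 1 \end{pmatrix}
\]
of the three unit vectors $x,y,e$, which is positive semidefinite; in particular $\det G \geq 0$. A direct expansion, followed by $\mathrm{Re}(z)\leq|z|$ on the single off-diagonal triple product, yields
\[
|\langle x,e\rangle|^{2} + |\langle y,e\rangle|^{2} + |\langle x,y\rangle|^{2} \;\leq\; 1 + 2\,|\langle x,y\rangle|\cdot|\langle x,e\rangle\langle e,y\rangle|.
\]

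To close the argument I would apply AM--GM to the two mixed terms, $|\langle x,e\rangle|^{2} + |\langle y,e\rangle|^{2} \geq 2|\langle x,e\rangle\langle e,y\rangle|$. Writing $p := |\langle x,e\rangle\langle e,y\rangle|$ and $q := |\langle x,y\rangle|$, the displayed inequality reduces to $2p + q^{2} \leq 1 + 2pq$, i.e.\ $2p(1-q) \leq (1-q)(1+q)$. For $q<1$ this divides down to $2p \leq 1+q$, and for the edge case $q=1$ one has $p\leq 1$ by plain Cauchy--Schwarz, so $2p\leq 1+q$ holds in either case. The main subtle point is this final step: bounding $|\langle x,e\rangle\langle e,y\rangle|$ by Cauchy--Schwarz alone only recovers $\|x\|\,\|y\|$ and loses the refinement $|\langle x,y\rangle|$; it is the Gram-determinant identity that couples all three pairwise inner products, and the convenient factorization $2p(1-q)\leq(1-q)(1+q)$ is exactly what delivers the decisive factor $\tfrac{1}{2}$ in Buzano's bound.
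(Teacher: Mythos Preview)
Your argument is correct. The Gram matrix of three unit vectors is positive semidefinite, its determinant expands exactly as you claim, and the factorisation $2p(1-q)\leq(1-q)(1+q)$ cleanly yields $p\leq\tfrac12(1+q)$; the edge case $q=1$ is handled. There is no gap.

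The paper itself does not prove this lemma: it is quoted from Buzano's original note and used as a black box. So there is no proof in the paper to compare against directly. For context, the argument most often given for Buzano's inequality is more operator--theoretic than yours: writing $P$ for the rank--one projection onto $\mathrm{span}\{e\}$ one has $\langle x,e\rangle\langle e,y\rangle=\langle Px,y\rangle$ and the decomposition $P=\tfrac12 I+\tfrac12(2P-I)$ together with $\|2P-I\|=1$ immediately gives
\[
|\langle Px,y\rangle|\leq\tfrac12|\langle x,y\rangle|+\tfrac12\|x\|\,\|y\|.
\]
That route is shorter and explains the constant $\tfrac12$ structurally (it is the spectral midpoint of the idempotent $P$), whereas your Gram--determinant approach is more combinatorial but has the virtue of being entirely elementary, requiring nothing beyond $\det G\geq 0$ and AM--GM. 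Both are standard and valid.
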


We now obtain another refinement of the triangle inequality.

\begin{theorem} \label{th2}
Let $A, D\in \mathcal{B}(\mathcal{H}).$ Then
\begin{eqnarray*}
\|A+D\|^2 \leq \|A\|^2+\|D\|^2+\|A\| \|D\|+\min \big \{ w(A^*D), w(AD^*) \big \} \leq (\|A\|+\|D\|)^2.
\end{eqnarray*}
\end{theorem}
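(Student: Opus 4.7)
The plan is to fix a unit vector $x\in\mathcal{H}$ and start from the expansion
\[
\|(A+D)x\|^{2} = \|Ax\|^{2} + \|Dx\|^{2} + \langle A^{*}Dx, x\rangle + \langle D^{*}Ax, x\rangle,
\]
whose cross-term is real, being equal to $2\,\mathrm{Re}\,\langle Ax, Dx\rangle$. I would then dominate the cross-term by $|\langle A^{*}Dx, x\rangle| + |\langle D^{*}Ax, x\rangle|$ and split the two summands asymmetrically. For one of them, Cauchy--Schwarz (which is the degenerate case $v = x$ of Buzano's inequality in Lemma~\ref{lemma1}) gives $|\langle D^{*}Ax, x\rangle| \le \|D^{*}Ax\| \le \|A\|\,\|D\|$. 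The other, $|\langle A^{*}Dx, x\rangle|$, I would leave untouched, so that taking the supremum over $\|x\| = 1$ converts it to the numerical radius $w(A^{*}D)$. Taking the supremum on the left then yields
\[
\|A+D\|^{2} \le \|A\|^{2} + \|D\|^{2} + \|A\|\,\|D\| + w(A^{*}D).
\]

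To obtain the companion estimate with $w(AD^{*})$, I would invoke $\|A+D\| = \|A^{*} + D^{*}\|$ and rerun the same argument with $(A^{*}, D^{*})$ in place of $(A, D)$; since $w\bigl((A^{*})^{*}D^{*}\bigr) = w(AD^{*})$, the analogous chain of inequalities delivers the dual bound. Taking the minimum of the two inequalities gives the middle inequality of the theorem. The right-hand inequality is then immediate from $w(A^{*}D) \le \|A^{*}D\| \le \|A\|\,\|D\|$ (and likewise $w(AD^{*}) \le \|A\|\,\|D\|$), which collapses the middle expression to $\|A\|^{2} + \|D\|^{2} + 2\|A\|\,\|D\| = (\|A\|+\|D\|)^{2}$.

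The argument is essentially a controlled use of the polarization identity combined with one carefully chosen asymmetric splitting of the cross-term; there is no deep obstacle. The only point that requires care is the adjoint bookkeeping: one must make sure that the estimate for $(A,D)$ produces $w(A^{*}D)$ (rather than $w(D^{*}A)$, its numerically equal but formally different sibling) and that the dual estimate for $(A^{*}, D^{*})$ produces $w(AD^{*})$, so that the stated $\min\{w(A^{*}D), w(AD^{*})\}$ appears exactly in the form written.
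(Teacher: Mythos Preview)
Your argument is correct and in fact simpler than the paper's. The paper works with the bilinear form $|\langle (A+D)x,y\rangle|^{2}$ for two unit vectors $x,y$, expands the square after a triangle inequality, and then invokes Buzano's inequality (Lemma~\ref{lemma1}) to control the cross term $2|\langle Ax,y\rangle\langle y,Dx\rangle|$ by $\|Ax\|\,\|Dx\|+|\langle Ax,Dx\rangle|$; only after this does the $y$-variable disappear. You bypass all of this by working directly with $\|(A+D)x\|^{2}$, whose expansion already has the cross term $\langle A^{*}Dx,x\rangle+\langle D^{*}Ax,x\rangle$; bounding one summand by Cauchy--Schwarz and leaving the other yields the same inequality without ever introducing a second vector or appealing to Buzano. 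The trade-off is that the paper's approach, being based on the two-vector form, fits a template the authors reuse elsewhere (e.g.\ Theorem~\ref{th4}), whereas your route is the minimal argument for this particular statement. Note also that $|\langle A^{*}Dx,x\rangle|=|\langle D^{*}Ax,x\rangle|$ since the two are conjugates, so your ``asymmetric'' splitting is really just writing $2|\langle A^{*}Dx,x\rangle|$ as a sum of two equal pieces and estimating them differently; this is harmless but worth noting.
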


\begin{proof}
Let $x,y\in \mathcal{H}$ be two unit vectors. Then we get,
\begin{eqnarray*}
|\langle (A+D)x,y \rangle |^2 &\leq & (|\langle Ax,y \rangle |+|\langle Dx,y \rangle |)^2\\
&=& |\langle Ax,y \rangle |^2+|\langle Dx,y \rangle |^2+2|\langle Ax,y \rangle \langle Dx,y \rangle |\\
&=& |\langle Ax,y \rangle |^2+|\langle Dx,y \rangle |^2+2|\langle Ax,y \rangle \langle y,Dx \rangle |\\
&\leq& |\langle Ax,y \rangle |^2+|\langle Dx,y \rangle |^2+\|Ax\| \|Dx\|+ |\langle Ax,Dx \rangle |,\\
&& ~~\,\,\,\,\,\,\,\,\,\,\,\,\,\,\,\,\,\,\,\,\,\,\,\,\,\,\,\,\,\,\,\,\,\,\,\,\,\,\, \,\,\,\,\,\,\,\,\,\,\,\,\,\,\,\,\,\,\,\,\,\,\,\,\,\,\,\,\,\,\,\,\,\,\,\,\,\,\,\,\,\,\,\,\,\,\,\,\,\,\,\,\,\,\,\,\,\,\,\,\,\,\,~~\mbox{By Lemma \ref{lemma1}}\\
&\leq& \|A\|^2+\|D\|^2+\|A\|\|D\|+w(A^*D).
\end{eqnarray*}
Taking supremum over $\|x\|=\|y\|=1$ we get,
\begin{eqnarray}\label{eqn4}
\|A+D\|^2 \leq \|A\|^2+\|D\|^2+\|A\|\|D\|+w(A^*D).
\end{eqnarray}
Replacing $A$ by $A^*$ and $D$ by $D^*$ in (\ref{eqn4}) we get, 
\begin{eqnarray}\label{eqn5}
\|A+D\|^2 \leq \|A\|^2+\|D\|^2+\|A\|\|D\|+w(AD^*).
\end{eqnarray}
Combining (\ref{eqn4}) and (\ref{eqn5}) we have the first inequality of the theorem.  The second inequality follows from the observation that   $\min \{ w(A^*D), w(AD^*)\}\leq \|A\|\|D\|.$ 
\end{proof}

\begin{remark}
It follows from Theorem \ref{th2} that if $\|A+D\|=\|A\|+\|D\|$ then  
$w(A^*D)=\|A\|\|D\|$ and $ w(AD^*)=\|A\|\|D\|.$
The converse is  not true, in general. Consider $A=I$ and $D=-I,$ then $w(A^*D)=\|A\|\|D\|~~\mbox{and}~~w(AD^*)=\|A\|\|D\| $, but $\|A+D\|\neq \|A\|+\|D\|$. 
\end{remark}






Now we need the following norm inequality.

\begin{lemma} $($\cite{DP}$)$\label{lem5}
Let $A, D\in \mathcal{B}(\mathcal{H})$ be positive. Then
\begin{eqnarray*}
\|A+D\| &\leq & \max \{ \|A\|,\|D\|\}+ \|AD\|^\frac{1}{2}.
\end{eqnarray*}
\end{lemma}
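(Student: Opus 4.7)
The strategy is to reduce to a quadratic inequality in $t := \|A+D\|$ by exploiting the operator inequality $A^{2}\le \|A\|\,A$ valid for every positive $A$ (either by functional calculus on $\sigma(A)\subset[0,\|A\|]$, or by sandwiching $A \le \|A\|\,I$ with $A^{1/2}$ on both sides). Set $M := \max\{\|A\|,\|D\|\}$; then $A^{2}\le MA$ and $D^{2}\le MD$, so
\begin{align*}
(A+D)^{2} \;=\; A^{2} + D^{2} + (AD+DA) \;\le\; M(A+D) + (AD+DA).
\end{align*}
Both sides are selfadjoint, and the left-hand side is positive, which forces the right-hand side to be positive too. Hence the order inequality transfers to operator norms, and combining with $\|(A+D)^{2}\|=\|A+D\|^{2}$ and the triangle inequality yields
\begin{align*}
\|A+D\|^{2} \;\le\; M\|A+D\| + \|AD+DA\|.
\end{align*}

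Since $(AD)^{*}=DA$, the triangle inequality gives $\|AD+DA\|\le 2\|AD\|$. The crucial move is to split this asymmetrically: because $\|AD\|^{1/2}\le(\|A\|\,\|D\|)^{1/2}\le M$, one has $\|AD\|\le M\|AD\|^{1/2}$, so
\begin{align*}
\|AD+DA\| \;\le\; \|AD\| + M\|AD\|^{1/2}.
\end{align*}
Plugging this in gives the scalar inequality $t^{2} - Mt - M\|AD\|^{1/2} - \|AD\|\le 0$, whose discriminant
\begin{align*}
M^{2} + 4M\|AD\|^{1/2} + 4\|AD\| \;=\; \bigl(M + 2\|AD\|^{1/2}\bigr)^{2}
\end{align*}
is a perfect square. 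The quadratic formula then delivers exactly $t\le M + \|AD\|^{1/2}$, as required.

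The one delicate point is recognising the asymmetric splitting of $2\|AD\|$ as $\|AD\| + M\|AD\|^{1/2}$: either of the obvious symmetric choices ($2\|AD\|$ or $2M\|AD\|^{1/2}$) leaves a non-square discriminant and yields a strictly weaker estimate. The splitting above is tuned precisely so that it pairs with the $M\|A+D\|$ term to complete the square, which is what makes the argument tight.
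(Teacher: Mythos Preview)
Your argument is correct. The paper itself does not prove this lemma; it is quoted from Davidson--Power \cite{DP} and used as a black box, so there is no in-paper proof to compare against. Your proof is a clean self-contained derivation: the operator inequality $A^{2}\le \|A\|A\le MA$ (and likewise for $D$) gives $(A+D)^{2}\le M(A+D)+(AD+DA)$, positivity of the left side forces positivity of the right, the norm passes through, and your asymmetric splitting $2\|AD\|\le \|AD\|+M\|AD\|^{1/2}$ manufactures a perfect-square discriminant that lands exactly on $M+\|AD\|^{1/2}$. One cosmetic remark: you do not actually need to observe that the right-hand side is positive---for any selfadjoint $T$ with $0\le S\le T$ one already has $\|S\|=\sup_{\|x\|=1}\langle Sx,x\rangle\le \sup_{\|x\|=1}\langle Tx,x\rangle\le \|T\|$---but your version is correct as stated since $T=(T-S)+S$ is a sum of positives.
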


Next refinement of Kittaneh's inequality (\ref{imp1}) is as follows.

\begin{theorem}\label{th9}
Let $A\in \mathcal{B}(\mathcal{H}).$ Then
\begin{eqnarray*}
&& \frac{1}{4}\|A^*A+AA^*\| \\
&\leq& \frac{1}{8}\bigg[ \max \big\{ \|A+A^*\|^2,\|A-A^*\|^2 \big\}+\|A+A^*\|\|A-A^*\| \bigg] \leq w^2(A).
\end{eqnarray*}
\end{theorem}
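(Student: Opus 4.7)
The plan is to reduce everything to the Cartesian decomposition $A = B + \mathrm{i} C$, where $B = \operatorname{Re}(A)$ and $C = \operatorname{Im}(A)$ are selfadjoint. Then $\|A+A^*\| = 2\|B\|$, $\|A-A^*\| = 2\|C\|$, and by identity (\ref{eql1}) we have $\frac{1}{4}\|A^*A+AA^*\| = \frac{1}{2}\|B^2+C^2\|$. After these substitutions, the desired double inequality becomes
\[
\|B^2+C^2\| \;\leq\; \max\{\|B\|^2,\|C\|^2\} + \|B\|\,\|C\| \;\leq\; 2w^2(A).
\]

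For the first inequality, I would apply Lemma \ref{lem5} to the positive operators $B^2$ and $C^2$, obtaining
\[
\|B^2+C^2\| \;\leq\; \max\{\|B^2\|,\|C^2\|\} + \|B^2 C^2\|^{1/2}.
\]
Then $\|B^2\|=\|B\|^2$, $\|C^2\|=\|C\|^2$, and submultiplicativity gives $\|B^2C^2\|^{1/2}\leq \|B\|\,\|C\|$, which yields the left-hand estimate after multiplying by $\tfrac{1}{2}$ and re-expressing in terms of $A\pm A^*$.

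For the second inequality I would use the elementary fact that $w(A)\geq \max\{\|B\|,\|C\|\}$: indeed, for any unit vector $x$, the identity $|\langle Ax,x\rangle|^2 = |\langle Bx,x\rangle|^2+|\langle Cx,x\rangle|^2$ immediately gives $w^2(A)\geq \|B\|^2$ and $w^2(A)\geq\|C\|^2$ (using that $B,C$ are selfadjoint so their norms equal their numerical radii). Since also $\|B\|\,\|C\|\leq \max\{\|B\|^2,\|C\|^2\}$, we get
\[
\max\{\|B\|^2,\|C\|^2\} + \|B\|\,\|C\| \;\leq\; 2\max\{\|B\|^2,\|C\|^2\} \;\leq\; 2w^2(A),
\]
which translates to the right-hand estimate in the theorem.

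There is no real obstacle here; the whole argument is a two-line application of Lemma \ref{lem5} on one side and a near-trivial consequence of the Cartesian-decomposition identity on the other. The only mild subtlety is keeping track of the factors of $2$ when translating between the $B, C$ formulation and the $A\pm A^*$ formulation, and recognizing that sharpness of $\|B^2 C^2\|^{1/2}\leq \|B\|\|C\|$ is not needed for the stated bound.
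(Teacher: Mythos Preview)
Your proposal is correct and follows essentially the same route as the paper: both arguments pass to the Cartesian decomposition $A=B+\mathrm{i}C$, apply Lemma~\ref{lem5} to $B^2,C^2$ together with $\|B^2C^2\|^{1/2}\le\|B\|\,\|C\|$ for the left inequality, and then use $\|B\|,\|C\|\le w(A)$ for the right inequality. The only difference is cosmetic --- you spell out the intermediate bound $\|B\|\,\|C\|\le\max\{\|B\|^2,\|C\|^2\}$, whereas the paper simply notes that both summands are at most $w^2(A)$.
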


\begin{proof}
From the identity (\ref{eql1}) we get,
\begin{eqnarray*}
\frac{1}{4}\|A^*A+AA^*\| &=& \frac{1}{2}\|B^2+C^2\| \\
&\leq & \frac{1}{2} \bigg[ \max\{ \|B\|^2,\|C\|^2 \} +\| B^2C^2 \|^{\frac{1}{2}} \bigg],~~\mbox{by Lemma \ref{lem5}}\\
&\leq & \frac{1}{2} \bigg[ \max\{ \|B\|^2,\|C\|^2 \} +\| B\| \|C \| \bigg].
\end{eqnarray*}
This implies the first inequality of the theorem. The second inequality follows from the observation that  $\|B\|\leq w(A)$ and $\|C\|\leq w(A)$.
\end{proof}


\begin{remark}
We  note that the second inequality in Theorem \ref{th9} refines the inequality (\ref{moradi}), obtained by Omidvar and Moradi  \cite[Th. 2.1]{OM}.
\end{remark}

Next we need the following lemma.

\begin{lemma} $($\cite[Th. 2.2]{BBP4}$)$ \label{lemma2}
Let $A, D\in \mathcal{B}(\mathcal{H}).$ Then
\begin{eqnarray*}
\|A+D\|^2 &\leq & 2 \max\big\{ \|A^*A+D^*D\|, \|AA^*+DD^*\| \big\}.
\end{eqnarray*}
\end{lemma}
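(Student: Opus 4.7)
My plan is to prove the two separate inequalities
$$\|A+D\|^2 \leq 2\|A^*A + D^*D\| \quad \text{and} \quad \|A+D\|^2 \leq 2\|AA^* + DD^*\|,$$
and then take the maximum of the two right-hand sides to obtain the claimed bound. (In fact this strategy yields the stronger statement with $\min$ in place of $\max$, but that refinement is not needed here.)

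For the first inequality, I would fix a unit vector $x \in \mathcal{H}$ and apply the elementary estimate $\|a+b\|^2 \leq 2(\|a\|^2 + \|b\|^2)$ to $a = Ax$ and $b = Dx$. This gives
$$\|(A+D)x\|^2 \leq 2(\|Ax\|^2 + \|Dx\|^2) = 2\langle (A^*A + D^*D)x, x\rangle \leq 2\|A^*A + D^*D\|.$$
Taking the supremum over unit vectors $x$ yields the desired bound $\|A+D\|^2 \leq 2\|A^*A + D^*D\|$.

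The second inequality then follows immediately from the first, applied to $A^*$ and $D^*$: one uses $\|A+D\| = \|A^* + D^*\|$ together with $(A^*)^*A^* = AA^*$ and $(D^*)^*D^* = DD^*$ to obtain $\|A+D\|^2 \leq 2\|AA^* + DD^*\|$.

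I do not anticipate any real obstacle here; the whole argument is essentially a one-line computation twice. The crux is recognising that the parallelogram-type inequality $\|a+b\|^2 \leq 2(\|a\|^2 + \|b\|^2)$ is precisely tailored to produce the factor of $2$ outside and the sum $A^*A + D^*D$ (respectively $AA^* + DD^*$) inside, after rewriting $\|Ax\|^2 + \|Dx\|^2$ as the inner product $\langle (A^*A + D^*D)x, x\rangle$.
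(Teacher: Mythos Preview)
Your argument is correct. The elementary inequality $\|a+b\|^2 \leq 2(\|a\|^2+\|b\|^2)$ applied to $a=Ax$, $b=Dx$ immediately gives $\|(A+D)x\|^2 \leq 2\langle (A^*A+D^*D)x,x\rangle \leq 2\|A^*A+D^*D\|$, and the second bound follows by the adjoint trick you describe. As you observe, this actually yields the sharper estimate with $\min$ in place of $\max$.

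Regarding comparison with the paper: the paper does not supply a proof of this lemma at all; it is simply quoted from \cite[Th.~2.2]{BBP4} and used as a tool in the proof of Theorem~\ref{th6a}. So there is no ``paper's own proof'' to compare against. Your self-contained argument is entirely adequate and, being a two-line computation, is likely no different in spirit from whatever appears in the cited reference.
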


Based on the above lemma we obtain another refinement of Kittaneh's inequality (\ref{imp1}).

\begin{theorem}\label{th6a}
Let $A\in \mathcal{B}(\mathcal{H}).$ Then
\begin{eqnarray*}
\frac{1}{4}\|A^*A+AA^*\| &\leq & \frac{1}{4\sqrt{2}} \bigg[\big\|A+A^*\big\|^4+\big \|A-A^*\big\|^4\bigg]^{\frac{1}{2}} \leq w^2(A).
\end{eqnarray*}

\end{theorem}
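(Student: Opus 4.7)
The plan is to combine the identity (\ref{eql1}) with Lemma \ref{lemma2}, exploiting the fact that the real and imaginary parts $B=\frac{A+A^*}{2}$ and $C=\frac{A-A^*}{2\mathrm{i}}$ are self-adjoint. By (\ref{eql1}) it suffices to control $\frac{1}{2}\|B^2+C^2\|$. Apply Lemma \ref{lemma2} to the pair $(B^2,C^2)$: since $B^2$ and $C^2$ are self-adjoint, $(B^2)^*(B^2)=(B^2)(B^2)^*=B^4$ and similarly for $C^2$, so the two terms in the maximum collapse to the single quantity $\|B^4+C^4\|$. This gives
\begin{equation*}
\|B^2+C^2\|^2 \;\leq\; 2\,\|B^4+C^4\| \;\leq\; 2\bigl(\|B\|^4+\|C\|^4\bigr),
\end{equation*}
by the triangle inequality for the operator norm.

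Taking square roots and inserting $\|B\|=\frac{1}{2}\|A+A^*\|$ and $\|C\|=\frac{1}{2}\|A-A^*\|$ yields
\begin{equation*}
\tfrac{1}{2}\|B^2+C^2\| \;\leq\; \tfrac{1}{\sqrt{2}}\bigl(\|B\|^4+\|C\|^4\bigr)^{1/2} \;=\; \tfrac{1}{4\sqrt{2}}\bigl(\|A+A^*\|^4+\|A-A^*\|^4\bigr)^{1/2},
\end{equation*}
which, combined with (\ref{eql1}), is exactly the first inequality of the theorem.

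For the second inequality I would use the standard bounds $\|B\|\leq w(A)$ and $\|C\|\leq w(A)$, which follow from $|\langle Bx,x\rangle|,|\langle Cx,x\rangle|\leq |\langle Ax,x\rangle|$ on unit vectors together with the self-adjointness of $B$ and $C$. These give $\|A+A^*\|\leq 2w(A)$ and $\|A-A^*\|\leq 2w(A)$, so
\begin{equation*}
\|A+A^*\|^4+\|A-A^*\|^4 \;\leq\; 32\,w^4(A),
\end{equation*}
and substituting back produces $\tfrac{1}{4\sqrt{2}}(32w^4(A))^{1/2}=w^2(A)$, as required.

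There is no real obstacle here; the only point that needs care is verifying that the maximum in Lemma \ref{lemma2} simplifies cleanly, and this works precisely because $B$ and $C$ are self-adjoint, so that the two natural candidates for the upper bound ($\|B^4+C^4\|$ from either side) coincide. Everything else is an application of the triangle inequality and the elementary estimates relating $\|B\|,\|C\|$ to $w(A)$.
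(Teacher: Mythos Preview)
Your proof is correct and follows essentially the same approach as the paper: apply identity (\ref{eql1}), then Lemma \ref{lemma2} to the self-adjoint pair $(B^2,C^2)$, followed by the triangle inequality and the standard bounds $\|B\|,\|C\|\leq w(A)$. Your explicit remark that the maximum in Lemma \ref{lemma2} collapses because $B^2$ and $C^2$ are self-adjoint is a helpful clarification that the paper leaves implicit.
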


\begin{proof}
From the identity (\ref{eql1}) we get,
\begin{eqnarray*}
\frac{1}{4}\|A^*A+AA^*\| &=& \frac{1}{2}\|B^2+C^2\| \\
&\leq & \frac{1}{\sqrt{2}}\|B^4+C^4\|^{\frac{1}{2}},~~\mbox{by Lemma \ref{lemma2}}.\\
&\leq & \frac{1}{\sqrt{2}}\bigg[\|B\|^4+\|C\|^4\bigg]^{\frac{1}{2}}.
\end{eqnarray*}
This implies the first inequality of the theorem. As before, the second inequality follows from the observation that  $\|B\|\leq w(A)$ and $\|C\|\leq w(A)$.
\end{proof}

\begin{remark}
The concavity of the function $f(t)=\sqrt{t}$ ensures that the first inequality in Theorem \ref{th6a} is stronger than the first inequality in Theorem \ref{th1}. We also note that the second inequality in Theorem \ref{th6a} refines the inequality (\ref{moradi}), obtained by Omidvar and Moradi \cite[Th. 2.1]{OM}.
\end{remark}

To obtain the next  refinement of (\ref{imp1}), we need the following lemma.

\begin{lemma} $($\cite[Th. 2.5]{BBP4}$)$ \label{lemma3}
Let $A, D\in \mathcal{B}(\mathcal{H}).$ Then
\begin{eqnarray*}
\|A+D\|^4 \leq  2 \max\big\{ \|A^*A+D^*D\|^2+4w^2(D^*A), \|AA^*+DD^*\|^2+4w^2(AD^*) \big\}.
\end{eqnarray*}
\end{lemma}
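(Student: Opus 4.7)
The plan is to exploit the C*-identity $\|T\|^{2}=\|T^{*}T\|=\|TT^{*}\|$ applied to $T=A+D$, mirroring how Lemma \ref{lemma2} is derived but keeping the cross term explicit rather than estimating it by the maximum of the two diagonal blocks of a $2\times 2$ operator matrix. Expanding
$$(A+D)^{*}(A+D)=(A^{*}A+D^{*}D)+(A^{*}D+D^{*}A)$$
and observing that the cross term $A^{*}D+D^{*}A=2\,\textit{Re}(A^{*}D)$ is self-adjoint, the triangle inequality for the operator norm immediately gives
$$\|A+D\|^{2}\leq \|A^{*}A+D^{*}D\|+\|A^{*}D+D^{*}A\|.$$

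The decisive step is to replace the cross term by a numerical-radius quantity. The standard inequality $\|\textit{Re}(T)\|\leq w(T)$, which is immediate from the Cartesian decomposition and the supremum definition of $w(\cdot)$, yields
$$\|A^{*}D+D^{*}A\|=2\,\|\textit{Re}(A^{*}D)\|\leq 2\,w(A^{*}D)=2\,w(D^{*}A),$$
where we have used $w(T)=w(T^{*})$. Substituting into the previous display, squaring, and applying the elementary inequality $(a+b)^{2}\leq 2(a^{2}+b^{2})$ produces
$$\|A+D\|^{4}\leq 2\bigl(\|A^{*}A+D^{*}D\|^{2}+4\,w^{2}(D^{*}A)\bigr).$$

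Running the identical argument with $\|T\|^{2}=\|TT^{*}\|$ in place of $\|T^{*}T\|$ — that is, starting from the expansion $(A+D)(A+D)^{*}=AA^{*}+DD^{*}+AD^{*}+DA^{*}$ — yields symmetrically
$$\|A+D\|^{4}\leq 2\bigl(\|AA^{*}+DD^{*}\|^{2}+4\,w^{2}(AD^{*})\bigr).$$
Since both bounds hold individually, so does their minimum, and in particular their maximum, which is precisely the stated form of the lemma.

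I do not foresee any genuine obstacle. The only slack enters through the quadratic comparison $(a+b)^{2}\leq 2(a^{2}+b^{2})$, and this is what forces the factor of $2$ on the right-hand side. The only conceptual ingredient beyond the C*-identity and the triangle inequality is the bound $\|\textit{Re}(T)\|\leq w(T)$, which is entirely standard; note that the same route without this refinement (namely $\|\textit{Re}(T)\|\leq \|T\|\leq \|A\|\|D\|$) would recover a weaker form comparable to Theorem \ref{th7}, confirming that routing the cross term through the numerical radius is what gives the sharper statement.
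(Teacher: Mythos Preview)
Your argument is correct. The paper does not supply its own proof of this lemma; it is quoted from \cite[Th.~2.5]{BBP4} and used as a black box in the proof of Theorem~\ref{th6}. Your route---the $C^{*}$-identity $\|T\|^{2}=\|T^{*}T\|$, the expansion of $(A+D)^{*}(A+D)$, the bound $\|\textit{Re}(T)\|\le w(T)$ on the cross term, and then the quadratic inequality $(a+b)^{2}\le 2(a^{2}+b^{2})$---is exactly the natural one and requires nothing beyond what the paper already has available.

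One remark on presentation: your final sentence is slightly tangled. You have shown that $\|A+D\|^{4}$ is bounded by \emph{each} of the two quantities separately, hence by their \emph{minimum}; the lemma as stated records only the weaker bound by their maximum. So you have in fact proved a sharper inequality than the one cited, and the stated form follows a fortiori. It would read more cleanly to say this directly rather than ``so does their minimum, and in particular their maximum.''
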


\begin{theorem}\label{th6}
Let $A\in \mathcal{B}(\mathcal{H}).$ Then
\begin{eqnarray*}
&& \frac{1}{4}\|A^*A+AA^*\| \\
&\leq&  \frac{1}{8} \bigg[2 \big ( \|A+A^*\|^4+\|A-A^*\|^4 \big)^2 +8 \|A+A^*\|^4 \|A-A^*\|^4  \bigg]^{\frac{1}{4}} \leq w^2(A).
\end{eqnarray*}

\end{theorem}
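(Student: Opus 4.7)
The plan is to mimic the proofs of Theorems \ref{th9} and \ref{th6a}, starting from identity (\ref{eql1}) and then plugging the Cartesian components $B=(A+A^*)/2$ and $C=(A-A^*)/2{\rm i}$ (both self-adjoint) into Lemma \ref{lemma3} with the substitution $A\mapsto B^2$, $D\mapsto C^2$. Since $B$ and $C$ are self-adjoint, $(B^2)^*B^2+(C^2)^*C^2=(B^2)(B^2)^*+(C^2)(C^2)^*=B^4+C^4$, and $w(C^2B^2)=w((B^2C^2)^*)=w(B^2C^2)$, so the maximum inside Lemma \ref{lemma3} collapses to a single expression, yielding
\begin{eqnarray*}
\|B^2+C^2\|^4 \leq 2\|B^4+C^4\|^2 + 8w^2(B^2C^2).
\end{eqnarray*}

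Next I would control the two terms on the right by scalar quantities. By the triangle inequality, $\|B^4+C^4\|\leq \|B\|^4+\|C\|^4$, and since $w(T)\leq \|T\|$, we have $w(B^2C^2)\leq \|B^2C^2\|\leq \|B\|^2\|C\|^2$. Substituting these bounds and using the identity (\ref{eql1}) gives
\begin{eqnarray*}
\tfrac{1}{4}\|A^*A+AA^*\|=\tfrac{1}{2}\|B^2+C^2\|\leq \tfrac{1}{2}\bigl[2(\|B\|^4+\|C\|^4)^2+8\|B\|^4\|C\|^4\bigr]^{1/4}.
\end{eqnarray*}
Finally, substitute $\|B\|=\tfrac{1}{2}\|A+A^*\|$ and $\|C\|=\tfrac{1}{2}\|A-A^*\|$. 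The factors $\tfrac{1}{16}$ appearing in $\|B\|^4$ and $\|C\|^4$ combine to a global factor of $(1/256)^{1/4}=1/4$ inside the fourth root, so together with the outer $1/2$ they yield the coefficient $1/8$ in the claimed inequality. This produces the first stated bound.

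For the second inequality, I use that for any unit vector $x$, $|\langle Bx,x\rangle|\leq |\langle Ax,x\rangle|$ (since $\langle Bx,x\rangle=\mathrm{Re}\langle Ax,x\rangle$), hence $\|B\|=w(B)\leq w(A)$, and similarly $\|C\|\leq w(A)$. Therefore $\|A+A^*\|\leq 2w(A)$ and $\|A-A^*\|\leq 2w(A)$, so
\begin{eqnarray*}
2(\|A+A^*\|^4+\|A-A^*\|^4)^2+8\|A+A^*\|^4\|A-A^*\|^4\leq 2(32 w^4(A))^2+8(16 w^4(A))^2=4096\, w^8(A),
\end{eqnarray*}
and taking fourth roots (noting $4096^{1/4}=8$) followed by multiplication by $1/8$ delivers the upper bound $w^2(A)$.

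I do not expect any serious obstacle: the argument is a clean combination of (\ref{eql1}), Lemma \ref{lemma3} (exploiting self-adjointness of $B,C$ to collapse the max), and the elementary bounds $\|B\|,\|C\|\leq w(A)$. The only point requiring a little care is bookkeeping of the numerical factors so that the coefficient $1/8$ and the exponent $1/4$ come out as stated; this is why the identification $(1/256)^{1/4}=1/4$ is the key arithmetic step.
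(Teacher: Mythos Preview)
Your proposal is correct and follows essentially the same route as the paper's proof: start from identity (\ref{eql1}), apply Lemma \ref{lemma3} with $B^2$ and $C^2$ (using self-adjointness of $B,C$ so that the two expressions in the maximum coincide), then bound $\|B^4+C^4\|\leq \|B\|^4+\|C\|^4$ and $w(B^2C^2)\leq \|B\|^2\|C\|^2$, and finish via $\|B\|,\|C\|\leq w(A)$. Your bookkeeping of the constants is accurate and matches the paper exactly.
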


\begin{proof}
From the identity (\ref{eql1}) we get,
\begin{eqnarray*}
\frac{1}{4}\|A^*A+AA^*\| &=& \frac{1}{2}\|B^2+C^2\| \\
&\leq & \frac{1}{2} \bigg[ 2\|B^4+C^4\|^2 +8\max \{ w^2(B^2C^2), w^2(C^2B^2) \}\bigg]^{\frac{1}{4}},\\
&&\,\,\,\,\,\,\,\,\,\,\,\,\,\,\,\,\,\,\,\,\,\,\,\,\,\,\,\,\,\,\,\,\,\,\,\,\,\,\,\,\,\,\,\,\,\,\,\,\,\,\,\,\,\,\,\,\,\,\,\,\,\,\,\,\,\,\,\,\,\,\,\,\,\,\,\,\,\,\,\,\,\,\,\,\,\,\,\,\,\,\,\,\,\,\,\,~~\mbox{by Lemma \ref{lemma3}}\\
&\leq & \frac{1}{2} \bigg[ 2(\|B\|^4+\|C\|^4)^2 +8 \|B\|^4\|C\|^4\bigg]^{\frac{1}{4}}.
\end{eqnarray*}
This implies the first inequality of the theorem. As before, the second inequality follows from the observation that  $\|B\|\leq w(A)$ and $\|C\|\leq w(A)$.
\end{proof}

We now prove the following norm inequalities.

\begin{theorem} \label{th4}
Let $A, D\in \mathcal{B}(\mathcal{H}).$ Then the following inequalities hold:
\begin{eqnarray}\label{eqn6}
\|A+D\|^2 \leq \|A\|^2+\|D\|^2+\frac{1}{2}\|A^*A+D^*D\|+w(A^*D)
\end{eqnarray}
and
\begin{eqnarray}\label{eqn7}
\|A+D\|^2 \leq \|A\|^2+\|D\|^2+\frac{1}{2}\|AA^*+DD^*\|+w(AD^*).
\end{eqnarray}

\end{theorem}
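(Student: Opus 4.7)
The plan is to mirror the argument used for Theorem \ref{th2}, starting from two unit vectors $x,y\in\mathcal{H}$ and analyzing $|\langle (A+D)x,y\rangle|^2$. First I would apply the triangle inequality followed by squaring to obtain
\[
|\langle (A+D)x,y\rangle|^2 \leq |\langle Ax,y\rangle|^2 + |\langle Dx,y\rangle|^2 + 2|\langle Ax,y\rangle\,\langle y,Dx\rangle|,
\]
and then invoke Buzano's inequality (Lemma \ref{lemma1}) with the choice $e=y$ to bound the cross term $|\langle Ax,y\rangle \langle y,Dx\rangle|$ by $\tfrac{1}{2}\bigl(\|Ax\|\|Dx\| + |\langle A^*Dx,x\rangle|\bigr)$.

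Next I would dispatch the three remaining pieces by carefully chosen but distinct estimates. The diagonal contribution $|\langle Ax,y\rangle|^2 + |\langle Dx,y\rangle|^2$ I would majorize crudely by $\|A\|^2+\|D\|^2$ via Cauchy--Schwarz (together with $\|x\|=\|y\|=1$); the product $\|Ax\|\|Dx\|$ I would treat with AM--GM, writing $\|Ax\|\|Dx\| \leq \tfrac{1}{2}(\|Ax\|^2+\|Dx\|^2) = \tfrac{1}{2}\langle (A^*A+D^*D)x,x\rangle$; and $|\langle A^*Dx,x\rangle|$ is bounded by $w(A^*D)$. Taking supremum over unit vectors $x$ and $y$ on both sides, and using that $A^*A+D^*D\geq 0$ so $\sup_{\|x\|=1}\langle (A^*A+D^*D)x,x\rangle = \|A^*A+D^*D\|$, yields (\ref{eqn6}).

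For (\ref{eqn7}), I would simply apply (\ref{eqn6}) with $A^*$ and $D^*$ in place of $A$ and $D$, using the identities $\|A+D\| = \|(A+D)^*\| = \|A^*+D^*\|$ and $w((A^*)^*D^*)=w(AD^*)$, which transform the right-hand side into $\|A\|^2+\|D\|^2+\tfrac{1}{2}\|AA^*+DD^*\|+w(AD^*)$.

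The main subtlety, which is not a computational difficulty but a strategic one, is the division of labor between the two estimates. One could instead bound the diagonal term by $\langle (A^*A+D^*D)x,x\rangle \leq \|A^*A+D^*D\|$, producing a different inequality in which only $\|A^*A+D^*D\|$ appears (with coefficient $\tfrac{3}{2}$) and $\|A\|^2+\|D\|^2$ does not. The form in the statement is obtained precisely by applying the sharper Cauchy--Schwarz bound to the individual inner products and reserving the AM--GM reduction for the Buzano term, so that both $\|A\|^2+\|D\|^2$ and $\tfrac{1}{2}\|A^*A+D^*D\|$ emerge naturally.
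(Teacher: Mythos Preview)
Your proposal is correct and follows essentially the same argument as the paper: both start from $|\langle (A+D)x,y\rangle|^2$, expand, apply Buzano's inequality (Lemma~\ref{lemma1}) to the cross term, use AM--GM on $\|Ax\|\|Dx\|$ to produce $\tfrac{1}{2}\langle(A^*A+D^*D)x,x\rangle$, bound the remaining pieces by $\|A\|^2$, $\|D\|^2$, and $w(A^*D)$, and then pass to adjoints for (\ref{eqn7}). Your additional commentary on the ``division of labor'' is a nice observation but does not alter the fact that the core proof is the same.
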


\begin{proof}
Let $x,y\in \mathcal{H}$ be two unit vectors. Then we get,
\begin{eqnarray*}
|\langle (A+D)x,y \rangle |^2 &\leq & (|\langle Ax,y \rangle |+|\langle Dx,y \rangle |)^2\\
&=& |\langle Ax,y \rangle |^2+|\langle Dx,y \rangle |^2+2|\langle Ax,y \rangle \langle Dx,y \rangle |\\
&=& |\langle Ax,y \rangle |^2+|\langle Dx,y \rangle |^2+2|\langle Ax,y \rangle \langle y,Dx \rangle |\\
&\leq& |\langle Ax,y \rangle |^2+|\langle Dx,y \rangle |^2+\|Ax\| \|Dx\|+ |\langle Ax,Dx \rangle |,\\
&& ~~\,\,\,\,\,\,\,\,\,\,\,\,\,\,\,\,\,\,\,\,\,\,\,\,\,\,\,\,\,\,\,\,\,\,\,\,\,\,\, \,\,\,\,\,\,\,\,\,\,\,\,\,\,\,\,\,\,\,\,\,\,\,\,\,\,\,\,\,\,\,\,\,\,\,\,\,\,\,\,\,\,\,\,\,\,\,\,\,\,\,\,\,\,\,\,\,\,\,\,\,\,\,~~\mbox{By Lemma \ref{lemma1}}\\
&\leq& |\langle Ax,y \rangle |^2+|\langle Dx,y \rangle |^2+\frac{1}{2}(\|Ax\|^2+ \|Dx\|^2)+ |\langle Ax,Dx \rangle |\\
&\leq& |\langle Ax,y \rangle |^2+|\langle Dx,y \rangle |^2\\
&& +\frac{1}{2}\langle (A^*A+D^*D)x,x \rangle + |\langle A^*Dx,x \rangle |\\
&\leq& \|A\|^2+\|D\|^2+\frac{1}{2}\|A^*A+D^*D\|+w(A^*D).
\end{eqnarray*}
Taking supremum over $\|x\|=\|y\|=1$ we get,
\begin{eqnarray*}
\|A+D\|^2 \leq \|A\|^2+\|D\|^2+\frac{1}{2}\|A^*A+D^*D\|+w(A^*D).
\end{eqnarray*}
Replacing $A$ by $A^*$ and $D$ by $D^*$ in the above inequality we get, 
\begin{eqnarray*}
\|A+D\|^2 \leq \|A\|^2+\|D\|^2+\frac{1}{2}\|AA^*+DD^*\|+w(AD^*).
\end{eqnarray*}
This completes the proof. 
\end{proof}

Based on the above norm inequalities we obtain the following refinement of Kittaneh's inequality (\ref{imp1}).

\begin{theorem}\label{th5}
Let $A\in \mathcal{B}(\mathcal{H}).$ Then
\begin{eqnarray*}
&& \frac{1}{4}\|A^*A+AA^*\| \\
&\leq& \frac{1}{8}\bigg[ \big(\|A+A^*\|^2+\|A-A^*\|^2\big)^2 + \frac{1}{2}\big(\|A+A^*\|^2-\|A-A^*\|^2\big)^2\bigg]^{\frac{1}{2}} \leq w^2(A).
\end{eqnarray*}
\end{theorem}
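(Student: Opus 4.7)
The proof should follow the same template as Theorems \ref{th9}, \ref{th6a}, and \ref{th6}: start from identity (\ref{eql1}), namely $\frac{1}{4}\|A^*A+AA^*\| = \frac{1}{2}\|B^2+C^2\|$ with $B=\operatorname{Re}(A)$ and $C=\operatorname{Im}(A)$, then bound $\|B^2+C^2\|$ using one of the norm inequalities just established. Here the relevant tool is the newly proved Theorem \ref{th4}, in particular inequality (\ref{eqn6}).

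The plan is to apply (\ref{eqn6}) with the substitutions $A \mapsto B^2$ and $D \mapsto C^2$. Since $B$ and $C$ are self-adjoint, $B^2$ and $C^2$ are positive, and (\ref{eqn6}) yields
\begin{equation*}
\|B^2+C^2\|^2 \leq \|B^2\|^2+\|C^2\|^2+\tfrac{1}{2}\|B^4+C^4\|+w(B^2C^2).
\end{equation*}
Next I would dominate each term on the right using the trivial bounds $\|B^2\|=\|B\|^2$, $\|C^2\|=\|C\|^2$, $\|B^4+C^4\| \leq \|B\|^4+\|C\|^4$, and $w(B^2C^2) \leq \|B^2C^2\| \leq \|B\|^2\|C\|^2$, to arrive at
\begin{equation*}
\|B^2+C^2\|^2 \leq \tfrac{3}{2}\bigl(\|B\|^4+\|C\|^4\bigr)+\|B\|^2\|C\|^2.
\end{equation*}

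The final step is an algebraic identity check. Writing $u=\|A+A^*\|^2=4\|B\|^2$ and $v=\|A-A^*\|^2=4\|C\|^2$, a direct expansion gives
\begin{equation*}
(u+v)^2+\tfrac{1}{2}(u-v)^2 = \tfrac{3}{2}(u^2+v^2)+uv = 16\Bigl[\tfrac{3}{2}\bigl(\|B\|^4+\|C\|^4\bigr)+\|B\|^2\|C\|^2\Bigr],
\end{equation*}
so taking square roots and multiplying by $\tfrac{1}{8}$ turns the previous bound into exactly the middle expression of the theorem, proving the first inequality. For the second inequality, it suffices to replace $\|B\|$ and $\|C\|$ by $w(A)$ (since $\|B\|,\|C\|\leq w(A)$): one checks that $\frac{3}{2}(w^4(A)+w^4(A))+w^4(A)=4w^4(A)$, hence the middle quantity is at most $\frac{1}{2}\sqrt{4w^4(A)}=w^2(A)$.

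No real obstacle is expected; the only point requiring care is the bookkeeping in the algebraic identity relating $(u+v)^2+\frac{1}{2}(u-v)^2$ to $\frac{3}{2}(\|B\|^4+\|C\|^4)+\|B\|^2\|C\|^2$, which is what produces the unusual coefficient $\frac{1}{2}$ on the $(u-v)^2$ term in the statement.
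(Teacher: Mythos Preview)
Your proposal is correct and follows essentially the same approach as the paper: you apply identity (\ref{eql1}), then Theorem~\ref{th4} (inequality (\ref{eqn6})) with $A\mapsto B^2$, $D\mapsto C^2$, bound each term by $\|B\|,\|C\|$, and finish with $\|B\|,\|C\|\leq w(A)$. Your write-up is in fact more explicit than the paper's, since you spell out the algebraic identity $(u+v)^2+\tfrac12(u-v)^2=\tfrac32(u^2+v^2)+uv$ that justifies the passage to the stated middle expression, whereas the paper simply asserts ``This implies the first inequality of the theorem.''
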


\begin{proof}
From the identity (\ref{eql1}) we get,
\begin{eqnarray*}
\frac{1}{4}\|A^*A+AA^*\| &=& \frac{1}{2}\|B^2+C^2\| \\
&\leq & \frac{1}{2} \bigg[\|B\|^4+\|C\|^4+\frac{1}{2}\|B^4+C^4\|+w(B^2C^2) \bigg]^{\frac{1}{2}},\\
&& \,\,\,\,\,\,\,\,\,\,\,\,\,\,\,\,\,\,\,\,\,\,\,\,\,\,\,\,\,\,\,\,\,\,\,\,\,\,\,\,\,\,\,\,\,\,\,\,\,\,\,\,\,\,\,\,\,\,\,\,\,\,\,\,\,\,\,\,\,\,\,\,\,\,\,\,\,\,\,\,\,\,\,\,\,\,\,\,\,\,\,\,\,\,\,\,\,\,\,\,~~\mbox{by Theorem \ref{th4}}.\\
&\leq & \frac{1}{2} \bigg[\|B\|^4+\|C\|^4+\frac{1}{2}(\|B\|^4+\|C\|^4)+\|B\|^2\|C\|^2) \bigg]^{\frac{1}{2}}.
\end{eqnarray*}
This implies the first inequality of the theorem. The second inequality follows from the observation that  $\|B\|\leq w(A)$ and $\|C\|\leq w(A)$.
\end{proof}

\begin{remark}
The first inequality in Theorem \ref{th5}  is better than the first inequality in Theorem \ref{th1}. We also note that the second inequality in Theorem \ref{th5} refines the inequality (\ref{moradi}), obtained by Omidvar and Moradi  \cite[Th. 2.1]{OM}.
\end{remark}


Now, by using the inequality (\ref{bhatia}) we prove the following numerical radius inequality.

\begin{theorem}\label{th-100}
Let $A\in \mathcal{B}(\mathcal{H})$. Then
\begin{eqnarray*}
\frac{1}{4}\|A^*A+AA^*\| &\leq& \frac{1}{2}w^2(A)+ \frac{1}{8}\bigg\|(A+A^*)^2 (A-A^*)^2 \bigg\|^{1/2} \leq w^2(A).
\end{eqnarray*}

\end{theorem}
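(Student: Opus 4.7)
The strategy is to reduce everything to the Cartesian decomposition $A = B + \mathrm{i} C$, with $B = (A+A^*)/2$ and $C = (A-A^*)/(2\mathrm{i})$, and then combine identity \eqref{eql1} with Lemma \ref{lem5}. First I would compute $(A+A^*)^2 = 4B^2$ and $(A-A^*)^2 = (2\mathrm{i}C)^2 = -4C^2$, so that $(A+A^*)^2(A-A^*)^2 = -16\, B^2 C^2$ and hence
$$\tfrac{1}{8}\bigl\|(A+A^*)^2(A-A^*)^2\bigr\|^{1/2} = \tfrac{1}{2}\|B^2 C^2\|^{1/2}.$$
Combining this with $\tfrac{1}{4}\|A^*A+AA^*\| = \tfrac{1}{2}\|B^2+C^2\|$ from \eqref{eql1}, the whole theorem reduces to establishing
$$\|B^2+C^2\| \;\leq\; w^2(A) + \|B^2 C^2\|^{1/2} \;\leq\; 2\,w^2(A).$$

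For the left-hand inequality I would apply Lemma \ref{lem5} to the positive operators $B^2$ and $C^2$, obtaining
$$\|B^2+C^2\| \leq \max\bigl\{\|B^2\|,\|C^2\|\bigr\} + \|B^2 C^2\|^{1/2} = \max\bigl\{\|B\|^2,\|C\|^2\bigr\} + \|B^2 C^2\|^{1/2}.$$
Since $B,C$ are the real and imaginary parts of $A$, the standard estimates $\|B\| \leq w(A)$ and $\|C\| \leq w(A)$ (which follow from $|\langle Bx,x\rangle|^2+|\langle Cx,x\rangle|^2 = |\langle Ax,x\rangle|^2$ together with $B=B^*$, $C=C^*$) give $\max\{\|B\|^2,\|C\|^2\} \leq w^2(A)$, completing this step.

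For the right-hand inequality it suffices to prove $\|B^2 C^2\|^{1/2} \leq w^2(A)$. The cleanest route is Bhatia--Kittaneh's inequality \eqref{bhatia} applied to the positive operators $B^2, C^2$, which yields $\|B^2 C^2\| \leq \tfrac14 \|B^2 + C^2\|^2$; combining this with the upper half of Kittaneh's inequality \eqref{imp1}, namely $\|B^2+C^2\| = \tfrac12\|A^*A+AA^*\| \leq 2 w^2(A)$, gives $\|B^2 C^2\| \leq w^4(A)$ as required. (The crude estimate $\|B^2 C^2\| \leq \|B\|^2\|C\|^2 \leq w^4(A)$ also works and bypasses \eqref{bhatia}, but the route via \eqref{bhatia} matches the remark preceding the statement.) I do not foresee any genuine obstacle: the only point requiring care is the bookkeeping in translating between the $B,C$ form and the $A\pm A^*$ form, specifically the factor of $16$ and the harmless sign in $(A+A^*)^2(A-A^*)^2 = -16 B^2 C^2$.
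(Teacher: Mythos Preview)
Your argument is correct. The second inequality is handled exactly as in the paper: both of you show $\|B^2C^2\|^{1/2}\le w^2(A)$ (the paper writes it via \eqref{bhatia} and the bounds $\|B\|,\|C\|\le w(A)$, which is one of the two options you list).

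The route to the first inequality, however, is genuinely different. The paper does not use Lemma~\ref{lem5}. Instead it first weakens the middle term via $\|B^2C^2\|^{1/2}\ge\|\,|B|\,|C|\,\|=\|BC\|$ to obtain
\[
\tfrac12 w^2(A)+\tfrac18\|(A+A^*)^2(A-A^*)^2\|^{1/2}\ \ge\ \tfrac12 w^2(A)+\tfrac18\|(A+A^*)(A-A^*)\|,
\]
and then invokes \cite[Th.~2.1]{OM} for the estimate $\tfrac12 w^2(A)+\tfrac18\|(A+A^*)(A-A^*)\|\ge\tfrac14\|A^*A+AA^*\|$. Your approach applies the Davidson--Power bound (Lemma~\ref{lem5}) directly to $B^2,C^2$, which is more self-contained (no appeal to \cite{OM}) and is in fact the same mechanism the paper uses earlier in Theorem~\ref{th9}; the paper's route, on the other hand, produces the extra intermediate inequality involving $\|(A+A^*)(A-A^*)\|$ for free.
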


\begin{proof}
Let $A=B+{\rm i}C$ be the Cartesian decomposition of $A$.
Then from the inequality (\ref{bhatia}) we have,
\[\|B^2C^2\| \leq \frac{1}{4}\|B^2+C^2\|^2 \leq \frac{1}{4}(\|B\|^2+\|C\|^2)^2.\]
It follows from the observation $\|B\|\leq w(A)$ and $\|C\|\leq w(A)$ that
$$w^2(A)\geq \|B^2C^2\|^{1/2}\geq \| ~~|B||C|~~ \|=\|BC\|.$$
Thus, 
\[ w^2(A)\geq \frac{1}{4}\bigg\|(A+A^*)^2 (A-A^*)^2 \bigg\|^{1/2}\geq \frac{1}{4}\bigg\|(A+A^*) (A-A^*) \bigg\|.\]
This implies that 
\[ w^2(A)\geq \frac{1}{2}w^2(A)+\frac{1}{8}\bigg\|(A+A^*)^2 (A-A^*)^2 \bigg\|^{1/2}\geq \frac{1}{2}w^2(A)+\frac{1}{8}\bigg\|(A+A^*) (A-A^*) \bigg\|.\]
From \cite[Th. 2.1]{OM}, we have
\[\frac{1}{2}w^2(A)+\frac{1}{8}\bigg\|(A+A^*) (A-A^*) \bigg\|\geq \frac{1}{4}\|A^*A+AA^*\|.\]
This completes the proof of the theorem.
\end{proof}

The following corollary is obvious.

\begin{cor}
Let $A\in \mathcal{B}(\mathcal{H})$. Then
\begin{eqnarray*}
	\frac{1}{2}\|A^*A+AA^*\| -  \frac{1}{4}\bigg\|(A+A^*)^2 (A-A^*)^2 \bigg\|^{1/2}&\leq& w^2(A)  \leq 	\frac{1}{2}\|A^*A+AA^*\|.
\end{eqnarray*}
\end{cor}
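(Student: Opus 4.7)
The plan is to observe that this corollary is essentially a cosmetic rearrangement of Theorem \ref{th-100} combined with the upper bound already recorded in (\ref{imp1}), so no new ideas are needed.

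First I would isolate the left inequality. Theorem \ref{th-100} asserts
\[
\frac{1}{4}\|A^*A+AA^*\| \;\leq\; \frac{1}{2}w^2(A) + \frac{1}{8}\bigl\|(A+A^*)^2(A-A^*)^2\bigr\|^{1/2}.
\]
I would subtract $\tfrac{1}{8}\|(A+A^*)^2(A-A^*)^2\|^{1/2}$ from both sides and then multiply through by $2$, which immediately yields
\[
\frac{1}{2}\|A^*A+AA^*\| - \frac{1}{4}\bigl\|(A+A^*)^2(A-A^*)^2\bigr\|^{1/2} \;\leq\; w^2(A).
\]
That is exactly the lower bound claimed in the corollary.

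Next I would invoke Kittaneh's inequality (\ref{imp1}), specifically its second half, which states $w^2(A) \leq \tfrac{1}{2}\|A^*A+AA^*\|$. This supplies the upper bound and completes the two-sided estimate.

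The only potential obstacle is purely bookkeeping: one should check that Theorem \ref{th-100} is being applied in its sharper form (the first of the two inequalities there) rather than the weaker composite bound, and that the factor of $\tfrac{1}{2}$ from the multiplicative rescaling is propagated correctly to both the norm term and the mixed term. Since no further estimates are required, the proof itself would be a one- or two-line derivation citing Theorem \ref{th-100} and the inequality (\ref{imp1}).
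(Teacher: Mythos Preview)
Your proposal is correct and matches the paper's approach exactly: the paper states the corollary as ``obvious'' without proof, and the intended derivation is precisely the algebraic rearrangement of the first inequality of Theorem~\ref{th-100} together with the upper bound from (\ref{imp1}) that you describe.
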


\begin{remark}
We note that if  $(A+A^*)^2 (A-A^*)^2=0$ then, $w^2(A)=\frac{1}{2}\|A^*A+AA^*\|.$
\end{remark}

\section{\textbf{Operator convex function in numerical radius inequalities}}

\noindent In this section we use some important properties of operator convex functions to improve on existing numerical radius inequalities as well as norm inequalities. A real-valued continuous function $f$ on an interval $J$ is said to be operator convex if for all selfadjoint operators $A, D \in \mathcal{B}(\mathcal{H})$ whose spectra are contained in $J$ satisfy $$f \big((1 - t)A + tD\big) \leq (1 - t) f(A) + tf(D)$$ for all $t \in [0, 1]$. The function $f(t) = t^r$ is operator convex on $[0,\infty)$ if either $1\leq r\leq 2$ or $-1\leq r\leq 0$. Interested readers can go through the excellent book \cite{Bh} for further readings on operator convex functions and their characterizations.
We need the following lemmas to proceed further.

\begin{lemma}\label{lem10}
Let $A\in \mathcal{B}(\mathcal{H})$ and let $f$ be non-negative increasing operator convex function on $[0,\infty)$. Then
\begin{eqnarray}
f(w^2(A)) &\leq & \| f(\alpha |A|^2+(1-\alpha)|A^*|^2)\|,~~\forall \alpha \in [0,1].
\end{eqnarray}

\end{lemma}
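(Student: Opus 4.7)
The plan is to combine the pointwise vector inequality underlying the Bhunia--Paul refinement (\ref{imp3}) with a Jensen-type argument, so as to push the operator convex function $f$ simultaneously through the supremum defining $w(A)$ on the left and through the positive operator $T_{\alpha}:=\alpha|A|^{2}+(1-\alpha)|A^{*}|^{2}$ on the right.

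First, I would establish the pointwise estimate that for every unit vector $x\in\mathcal{H}$ and every $\alpha\in[0,1]$,
$$|\langle Ax,x\rangle|^{2}\leq\alpha\|Ax\|^{2}+(1-\alpha)\|A^{*}x\|^{2}=\langle T_{\alpha}x,x\rangle.$$
At $\alpha=1$ this is Cauchy--Schwarz $|\langle Ax,x\rangle|^{2}\leq\|Ax\|^{2}$, and at $\alpha=0$ it is the same bound applied to $\langle x,A^{*}x\rangle$; for intermediate $\alpha$ it follows because a convex combination of two valid upper bounds is again an upper bound. Since $f$ is non-negative and increasing on $[0,\infty)$, applying $f$ to both sides gives $f(|\langle Ax,x\rangle|^{2})\leq f(\langle T_{\alpha}x,x\rangle)$.

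Next, exploiting that $f$ is operator convex (hence in particular scalar convex), Jensen's inequality applied via the spectral theorem for the positive operator $T_{\alpha}$ against the unit vector $x$ yields
$$f(\langle T_{\alpha}x,x\rangle)\leq\langle f(T_{\alpha})x,x\rangle\leq\|f(T_{\alpha})\|.$$
Chaining this with the previous display and taking the supremum over $\|x\|=1$, the continuity and monotonicity of $f$ let me pull $f$ through the scalar supremum, producing
$$f(w^{2}(A))=\sup_{\|x\|=1}f(|\langle Ax,x\rangle|^{2})\leq\|f(T_{\alpha})\|,$$
which is the claimed inequality.

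The only real subtlety is justifying the Jensen step $f(\langle T_{\alpha}x,x\rangle)\leq\langle f(T_{\alpha})x,x\rangle$; this is standard once one interprets it as scalar Jensen against the probability measure $d\mu_{x}(\lambda)=d\langle E_{\lambda}x,x\rangle$ associated with the spectral resolution $\{E_{\lambda}\}$ of $T_{\alpha}$. Operator convexity is actually stronger than required in this single-vector setting, but it is the natural hypothesis for the applications in the rest of the section.
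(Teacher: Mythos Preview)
Your argument is correct, but it takes a longer path than the paper. The paper simply quotes the scalar inequality $w^{2}(A)\leq\|T_{\alpha}\|$ (this is exactly (\ref{imp3})) and then applies $f$ to both sides: monotonicity gives $f(w^{2}(A))\leq f(\|T_{\alpha}\|)$, and for a positive operator $T_{\alpha}$ with non-negative increasing continuous $f$ one has $f(\|T_{\alpha}\|)=\|f(T_{\alpha})\|$ by the spectral mapping, finishing in one line. You instead re-derive the pointwise estimate $|\langle Ax,x\rangle|^{2}\leq\langle T_{\alpha}x,x\rangle$ and invoke the Jensen step $f(\langle T_{\alpha}x,x\rangle)\leq\langle f(T_{\alpha})x,x\rangle$; this is valid but unnecessary, since after taking the supremum the Jensen detour collapses to the same spectral identity. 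In particular, neither proof actually uses operator convexity of $f$ here---the paper needs only monotonicity and continuity, and your version needs only scalar convexity---so the operator-convex hypothesis is really there for the later results in the section, as you correctly suspected.
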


\begin{proof}
Following \cite[Cor. 2.5]{BP} we have,
\begin{eqnarray}\label{eqn100}
w^2(A)\leq \|\alpha |A|^2+(1-\alpha)|A^*|^2\|, ~~\forall ~~\alpha\in [0,1].
\end{eqnarray}
Therefore, for all $\alpha \in [0,1]$,
\begin{eqnarray*}
f(w^2(A))&\leq& f(\|\alpha |A|^2+(1-\alpha)|A^*|^2\|)\leq \| f(\alpha |A|^2+(1-\alpha)|A^*|^2)\|.
\end{eqnarray*}
This completes the proof.
\end{proof}

\begin{lemma} $($\cite{D}$)$\label{lem11}
Let $f: J\rightarrow \mathbb{R}$ be an operator convex function on the interval $J$. Let $A$ and $D$ be two selfadjoint operators  with spectra in $J$. Then
\begin{eqnarray}\label{dragomir}
f\left(\frac{A+D}{2}\right) \leq \int_0^1 f \bigg( (1-t)A+tD\bigg)dt\leq \frac{1}{2}\big(f(A)+f(D)\big).     
\end{eqnarray}
\end{lemma}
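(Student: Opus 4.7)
The statement is the Hermite--Hadamard inequality in the operator-convex setting, so I would follow the classical scalar argument with obvious operator-theoretic modifications. The plan is to prove the right inequality by direct integration of the convexity hypothesis, and the left (midpoint) inequality by a symmetrization trick.

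For the right inequality, I would start from the definition of operator convexity. Since both $A$ and $D$ are selfadjoint with spectra in $J$, the operator $(1-t)A + tD$ has spectrum in $J$ for every $t \in [0,1]$, so $f((1-t)A+tD)$ is well-defined, and operator convexity gives
\begin{equation*}
f\bigl((1-t)A + tD\bigr) \leq (1-t) f(A) + t f(D), \qquad t \in [0,1].
\end{equation*}
Integrating this operator inequality in $t$ over $[0,1]$ in the strong sense (which preserves the order because the partial sums of Riemann approximations do), I get
\begin{equation*}
\int_0^1 f\bigl((1-t)A + tD\bigr)\,dt \leq \int_0^1 \bigl[(1-t) f(A) + t f(D)\bigr]\,dt = \frac{f(A)+f(D)}{2},
\end{equation*}
which is the right inequality in \eqref{dragomir}.

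For the left inequality, I would exploit the fact that the midpoint $\frac{A+D}{2}$ is itself a convex combination of the two ``mirrored'' points $(1-t)A + tD$ and $tA + (1-t)D$ with weights $\tfrac{1}{2},\tfrac{1}{2}$. Applying operator convexity to this decomposition gives
\begin{equation*}
f\!\left(\frac{A+D}{2}\right) \leq \frac{1}{2}\, f\bigl((1-t)A + tD\bigr) + \frac{1}{2}\, f\bigl(tA + (1-t)D\bigr), \qquad t\in [0,1].
\end{equation*}
Integrating in $t$ and then applying the substitution $s = 1-t$ to the second integral shows that both integrals on the right-hand side coincide, yielding the desired midpoint bound
\begin{equation*}
f\!\left(\frac{A+D}{2}\right) \leq \int_0^1 f\bigl((1-t)A + tD\bigr)\,dt.
\end{equation*}

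The only mildly delicate point is justifying that the strong operator integral of a continuous function of $t$ with values in $\mathcal{B}(\mathcal{H})$ preserves the operator order; this follows because $\int_0^1 \langle [P(t)-Q(t)]x,x\rangle\,dt \geq 0$ for every unit vector $x$ whenever $P(t)\leq Q(t)$ pointwise, and the integrand $t\mapsto \langle f((1-t)A+tD)x,x\rangle$ is continuous. Beyond this, the argument is routine and is the standard Hermite--Hadamard proof transplanted verbatim to the operator-convex category.
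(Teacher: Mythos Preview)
Your argument is correct and is the standard Hermite--Hadamard proof carried over to operator-convex functions: integrate the defining inequality for the right-hand bound, and use the midpoint symmetrization $(1-t)A+tD$, $tA+(1-t)D$ for the left-hand bound. Note, however, that the paper does not supply its own proof of this lemma; it is quoted from Dragomir~\cite{D} and stated without argument. Your write-up is essentially the proof given in that reference, so there is nothing to contrast here.
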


If $f$ is non-negative then the  operator inequality (\ref{dragomir}) can be reduced to the following norm inequality
\begin{eqnarray}\label{2.15}
\left \| f\left(\frac{A+D}{2}\right)\right\| \leq \left \|\int_0^1 f \bigg( (1-t)A+tD\bigg)dt \right\|\leq \frac{1}{2}\big\|f(A)+f(D)\big\|.
\end{eqnarray}

\begin{lemma}$($\cite{K02}$)$\label{lem12}
Let $A,D \in \mathcal{B}(\mathcal{H})$ be positive. Then $\|A+D\|=\|A\|+\|D\|$ if and only if $\|AD\|=\|A\|\|D\|.$

\end{lemma}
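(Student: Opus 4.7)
The plan is to reduce both directions to the construction of a single sequence of unit vectors $\{x_n\}\subset\mathcal{H}$ that is simultaneously an approximate eigenvector for $A$ at $\|A\|$ and for $D$ at $\|D\|$. Once such $\{x_n\}$ is available, $ADx_n\to\|A\|\|D\|\,x_n$ and $(A+D)x_n\to(\|A\|+\|D\|)x_n$ follow directly, and, combined with the standard upper bounds $\|AD\|\le\|A\|\|D\|$ and $\|A+D\|\le\|A\|+\|D\|$, either implication drops out by supplying the matching witness sequence.

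The technical workhorse is a standard fact about positive operators: if $T\in\mathcal{B}(\mathcal{H})$ is positive and $\{x_n\}$ is a sequence of unit vectors with $\langle Tx_n,x_n\rangle\to\|T\|$, then $Tx_n-\|T\|x_n\to 0$ in norm. I would prove this by expanding
\[
\|(T-\|T\|I)x_n\|^2 = \langle T^2x_n,x_n\rangle - 2\|T\|\langle Tx_n,x_n\rangle + \|T\|^2 \le 2\|T\|^2 - 2\|T\|\langle Tx_n,x_n\rangle \to 0,
\]
using that $\langle T^2x_n,x_n\rangle=\|Tx_n\|^2\le\|T\|^2$.

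For the forward direction, suppose $\|A+D\|=\|A\|+\|D\|$. Since $A+D$ is positive, one may choose unit vectors $x_n$ with $\langle(A+D)x_n,x_n\rangle\to\|A\|+\|D\|$. Because $\langle Ax_n,x_n\rangle\le\|A\|$ and $\langle Dx_n,x_n\rangle\le\|D\|$, both sequences must saturate, and the workhorse lemma applied to $A$ and to $D$ yields $Ax_n-\|A\|x_n\to 0$ and $Dx_n-\|D\|x_n\to 0$. Then $ADx_n=\|D\|\,Ax_n+o(1)=\|A\|\|D\|\,x_n+o(1)$, giving $\|AD\|\ge\|A\|\|D\|$.

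For the reverse direction, assume $\|AD\|=\|A\|\|D\|$; the case $\|A\|=0$ or $\|D\|=0$ is trivial, so suppose both are positive. Pick unit $x_n$ with $\|ADx_n\|\to\|A\|\|D\|$. From $\|ADx_n\|\le\|A\|\|Dx_n\|\le\|A\|\|D\|$ I deduce $\|Dx_n\|\to\|D\|$; the estimate $\|Dx_n\|^2\le\|D\|\,\langle Dx_n,x_n\rangle$ (immediate from $D\ge 0$ via $D^{1/2}$) together with $\langle Dx_n,x_n\rangle\le\|D\|$ forces $\langle Dx_n,x_n\rangle\to\|D\|$, and the workhorse then gives $Dx_n-\|D\|x_n\to 0$. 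Setting $y_n=Dx_n/\|Dx_n\|$ (eventually well defined), we have $\|Ay_n\|\to\|A\|$, and the same argument applied to $A$ produces $Ay_n-\|A\|y_n\to 0$; since $y_n-x_n\to 0$, also $Ax_n-\|A\|x_n\to 0$. Adding yields $(A+D)x_n\to(\|A\|+\|D\|)x_n$ and hence $\|A+D\|\ge\|A\|+\|D\|$. The delicate step is precisely this reverse direction: information about the product $AD$ must be bootstrapped into simultaneous approximate-eigenvector control of the two factors separately, and the passage through the auxiliary sequence $y_n$ is where the main care is required.
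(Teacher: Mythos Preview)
Your argument is correct. The workhorse lemma is valid (for positive $T$ and unit $x_n$ with $\langle Tx_n,x_n\rangle\to\|T\|$, the displayed expansion together with $\langle T^2x_n,x_n\rangle\le\|T\|^2$ indeed forces $(T-\|T\|I)x_n\to 0$), and both directions are executed cleanly. In the reverse direction the chain $\|Dx_n\|^2=\|D^{1/2}(D^{1/2}x_n)\|^2\le\|D\|\,\langle Dx_n,x_n\rangle$ is the right way to pass from $\|Dx_n\|\to\|D\|$ to $\langle Dx_n,x_n\rangle\to\|D\|$, and the auxiliary sequence $y_n=Dx_n/\|Dx_n\|$ together with $y_n-x_n\to 0$ correctly transfers the approximate-eigenvector property from $D$ to $A$.

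As for comparison with the paper: there is nothing to compare. The paper does not supply a proof of this lemma; it is quoted from Kittaneh \cite{K02} and used as a black box in the proof of Theorem~\ref{th100}. Your self-contained approximate-eigenvector argument is a standard and perfectly acceptable way to establish the result.
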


We now present the first theorem of this section.

\begin{theorem}\label{th100}
Let $A\in \mathcal{B}(\mathcal{H})$ and let $f$ be non-negative increasing operator convex function on $[0,\infty)$. Then
\begin{eqnarray*}
f(w^2(A)) &\leq & \left\| \int_0^1 f \bigg( (1-t)\big(\alpha |A|^2+(1-\alpha) |A^*|^2\big)+tw^2(A)I \bigg)dt  \right\|  \\
&\leq&  \big\|f(\alpha |A|^2+(1-\alpha) |A^*|^2)\big\|, ~~\forall \alpha \in [0,1].
\end{eqnarray*}

\end{theorem}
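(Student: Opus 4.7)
The plan is to apply Dragomir's operator inequality (Lemma \ref{lem11}) to the two positive selfadjoint operators $T := \alpha|A|^{2}+(1-\alpha)|A^{*}|^{2}$ and $w^{2}(A)\,I$, whose spectra lie in $[0,\infty)$, where $f$ is operator convex. Writing $s := w^{2}(A)$ for brevity, Lemma \ref{lem11} gives the two-sided operator inequality
$$f\!\left(\frac{T+sI}{2}\right)\;\leq\;\int_{0}^{1}f\bigl((1-t)T+t\,sI\bigr)\,dt\;\leq\;\tfrac{1}{2}\bigl(f(T)+f(s)\,I\bigr).$$
Because $f\geq 0$ on $[0,\infty)$, each of the three operators above is positive, so the passage to operator norms preserves the chain, yielding exactly the norm inequality (\ref{2.15}) in this setting.

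For the right-hand inequality of the theorem I will simplify the upper bound $\tfrac{1}{2}\|f(T)+f(s)I\|$ by using the elementary fact that for any positive operator $P$ and scalar $c\geq 0$, the spectrum of $P+cI$ is the translate of $\sigma(P)$ by $c$, so $\|P+cI\|=\|P\|+c$. Applied with $P=f(T)$ and $c=f(s)$, this yields $\tfrac{1}{2}\|f(T)+f(s)I\|=\tfrac{1}{2}(\|f(T)\|+f(s))$. Lemma \ref{lem10} then supplies $f(s)=f(w^{2}(A))\leq\|f(T)\|$, and averaging produces $\tfrac{1}{2}\|f(T)+f(s)I\|\leq\|f(T)\|$, which is the second inequality of the theorem.

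For the left-hand inequality I will invoke spectral mapping: for a positive operator $P$ and a non-negative, continuous, increasing $f$ on $[0,\infty)$, one has $\|f(P)\|=f(\|P\|)$. Consequently $\|f(\tfrac{T+sI}{2})\|=f(\tfrac{\|T\|+s}{2})$, again via $\|P+cI\|=\|P\|+c$. The inequality $s=w^{2}(A)\leq\|T\|$, which is exactly the bound (\ref{eqn100}) borrowed from \cite[Cor.~2.5]{BP} inside the proof of Lemma \ref{lem10}, guarantees $\tfrac{\|T\|+s}{2}\geq s$, and monotonicity of $f$ then gives $f(w^{2}(A))\leq f(\tfrac{\|T\|+s}{2})=\|f(\tfrac{T+sI}{2})\|$. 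Combining with the norm version of Dragomir closes the chain.

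I do not foresee a significant technical obstacle; the argument is a clean assembly of (\ref{2.15}), the identity $\|P+cI\|=\|P\|+c$ for positive $P$, and the single prior bound $w^{2}(A)\leq\|\alpha|A|^{2}+(1-\alpha)|A^{*}|^{2}\|$. The one point that requires care, and which is the main subtlety, is that both the identity $\|f(P)\|=f(\|P\|)$ and the step $f(w^{2}(A))\leq f(\tfrac{\|T\|+s}{2})$ depend on $f$ being \emph{increasing} in addition to operator convex; this monotonicity is already part of the hypothesis, and without it neither half of the theorem would follow from Dragomir's inequality alone.
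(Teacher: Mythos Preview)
Your proof is correct and follows essentially the same route as the paper's: both arguments sandwich the integral term via the norm form (\ref{2.15}) of Dragomir's inequality, then close each side using $w^{2}(A)\leq\|T\|$ from (\ref{eqn100}) together with Lemma~\ref{lem10}. The only cosmetic difference is that the paper obtains $\|T+sI\|=\|T\|+s$ and $\|f(T)+f(s)I\|=\|f(T)\|+f(s)$ by invoking Lemma~\ref{lem12}, whereas you use the more elementary spectral identity $\|P+cI\|=\|P\|+c$ for positive $P$ directly; in the case $D=cI$ these are the same fact.
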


\begin{proof}
For all $\alpha \in [0,1]$ we have,
\[\big \| \big(\alpha |A|^2+(1-\alpha) |A^*|^2\big) w^2(A)I   \big\|=\big\|\alpha |A|^2+(1-\alpha) |A^*|^2\big\| \big\|w^2(A)I\big\|.\]
Thus, it follows from Lemma \ref{lem12} that
\[ \big\|\alpha |A|^2+(1-\alpha) |A^*|^2+w^2(A)I \big\|=\big\|\alpha |A|^2+(1-\alpha) |A^*|^2\big\|+w^2(A).  \]
So, by using the inequality (\ref{eqn100}) we get,
\[w^2(A) \leq \frac{1}{2}\big\|\alpha |A|^2+(1-\alpha) |A^*|^2+w^2(A)I \big\|.\]
Then , 
\begin{eqnarray*}
f(w^2(A)) &\leq&  f\left(\frac{1}{2}\big\|\alpha |A|^2+(1-\alpha) |A^*|^2+w^2(A)I \big\|\right)\\
&\leq & \left\| f\left ( \frac{\alpha |A|^2+(1-\alpha) |A^*|^2+w^2(A)I}{2}  \right)   \right \| \\
&\leq & \left\| \int_0^1 f \bigg( (1-t)\big(\alpha |A|^2+(1-\alpha) |A^*|^2\big)+tw^2(A)I \bigg)dt  \right\|,\\
&&\,\,\,\,\,\,\,\,\,\,\,\,\,\,\,\,\,\,\,\,\,\,\,\,\,\,\,\,\,\,\,\,\,\,\,\,\,\,\,\,\,\,\,\,\,\,\,\,\,\,\,\,\,\,\,\,\,\,\,\,\,\,\,\,\,\,\,\,\,\,\,\,\,\,\,\,\,\,\,\,\,\,\,\,\,\,\,\,\,\,\,\,\,\,\,\,\,\,\,\,\,\,\,\,\,\,\,\,\mbox{by inequality  (\ref{2.15})}\\
&\leq & \frac{1}{2} \big\|f(\alpha |A|^2+(1-\alpha) |A^*|^2)+f(w^2(A))I  \big\|,~~\mbox{by inequality  (\ref{2.15})}\\
&= & \frac{1}{2} \big\|f(\alpha |A|^2+(1-\alpha) |A^*|^2)\big\| +\frac{1}{2} f(w^2(A)),~~~~\mbox{by Lemma \ref{lem12}}\\
&\leq & \big \|f(\alpha |A|^2+(1-\alpha) |A^*|^2)\big\|,~~~~\mbox{by Lemma \ref{lem10}}.
\end{eqnarray*}
This completes the proof.
\end{proof}

By considering $f(t)=t^2$ in Theorem \ref{th100}, we get the following corollary.

\begin{cor}\label{cor1}
Let $A\in \mathcal{B}(\mathcal{H})$. Then
\begin{eqnarray*}
w^2(A) &\leq & \left\| \int_0^1  \bigg( (1-t)\big(\alpha |A|^2+(1-\alpha) |A^*|^2 \big)+tw^2(A)I \bigg)^2dt  \right\|^{1/2}  \\
&\leq&  \big\| \alpha |A|^2+(1-\alpha)|A^*|^2 \big\|,~~~~\forall \alpha \in [0,1].
\end{eqnarray*}
\end{cor}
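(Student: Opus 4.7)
The corollary is an immediate specialization of Theorem \ref{th100}, so my plan is simply to verify that the function $f(t)=t^2$ satisfies all the hypotheses of that theorem on $[0,\infty)$, apply it, and extract square roots.

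First I would observe that $f(t)=t^2$ is continuous, non-negative, and increasing on $[0,\infty)$. Moreover, the paper explicitly notes in the introduction to Section~3 that $t^r$ is operator convex on $[0,\infty)$ whenever $1\le r\le 2$, so in particular $f(t)=t^2$ qualifies. Hence all hypotheses of Theorem~\ref{th100} are met.

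Next I would plug $f(t)=t^2$ into the conclusion of Theorem~\ref{th100}. The left-hand side becomes $f(w^2(A))=w^4(A)$, the middle term becomes
\[
\left\| \int_0^1 \bigl( (1-t)\bigl(\alpha |A|^2+(1-\alpha)|A^*|^2\bigr)+tw^2(A)I \bigr)^2 dt \right\|,
\]
and the right-hand side becomes $\bigl\|(\alpha|A|^2+(1-\alpha)|A^*|^2)^2\bigr\|$. Since $\alpha|A|^2+(1-\alpha)|A^*|^2$ is a positive operator, this last quantity equals $\bigl\|\alpha|A|^2+(1-\alpha)|A^*|^2\bigr\|^2$.

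Finally I would take square roots throughout the resulting chain of inequalities; monotonicity of $\sqrt{\cdot}$ preserves the ordering and yields precisely the stated two-sided estimate, valid for all $\alpha\in[0,1]$. There is no substantive obstacle here: the whole argument is a one-line substitution into Theorem~\ref{th100} followed by extracting a square root, with the only minor checks being the operator convexity of $t^2$ (quoted in the text) and the identification $\|P^2\|=\|P\|^2$ for the positive operator $P=\alpha|A|^2+(1-\alpha)|A^*|^2$.
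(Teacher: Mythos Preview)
Your proposal is correct and follows exactly the paper's approach: the corollary is obtained simply by substituting $f(t)=t^2$ into Theorem~\ref{th100} and then taking square roots, using $\|P^2\|=\|P\|^2$ for the positive operator $P=\alpha|A|^2+(1-\alpha)|A^*|^2$.
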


In particular, for $\alpha=\frac{1}{2}$
\begin{eqnarray*}
w^2(A) \leq  \left\| \int_0^1  \bigg( (1-t)\left( \frac{|A|^2+ |A^*|^2}{2} \right)+tw^2(A)I \bigg)^2dt  \right\|^{1/2} \leq  \frac{1}{2} \left\|  |A|^2+|A^*|^2\right \|.
\end{eqnarray*}

This inequality can be written in the following form:
\begin{eqnarray}\label{50a}
w^2(A) &\leq& \frac{1}{\sqrt{3}} \left\|  \left(\frac{|A|^2+|A^*|^2}{2}\right)^2 +w^4(A)I + w^2(A)\left(\frac{|A|^2+|A^*|^2}{2}\right) \right\|^{1/2} \\
&\leq&  \left\| \frac{|A|^2+|A^*|^2}{2}  \right\|.
\end{eqnarray}

\begin{remark}
We observe that the inequality (\ref{50a}) is sharper than the inequality (\ref{imp1}), obtained  by Kittaneh \cite[Th. 1]{kittaneh1}. We also remark that the inequality in Corollary \ref{cor1} improves on the inequality (\ref{imp3}), obtained by Bhunia and Paul \cite[Cor. 2.5]{BP}.

\end{remark}

The following theorem again involves operator convex function.

\begin{theorem}\label{th102}
Let $A\in \mathcal{B}(\mathcal{H})$ and let $f$ be non-negative increasing operator convex function on $[0,\infty)$. Then
\begin{eqnarray*}
f(w^2(A)) &\leq & \left\| \int_0^1 f \left( (1-t)\left(\alpha \left(\frac{|A|+|A^*|}{2}\right)^2+(1-\alpha) |A|^2\right)+tw^2(A)I \right)dt  \right\|  \\
&\leq&  \left\| f\left(\alpha \left(\frac{|A|+|A^*|}{2}\right)^2+(1-\alpha) |A|^2\right)\right\|, ~~\forall \alpha \in [0,1].
\end{eqnarray*}
\end{theorem}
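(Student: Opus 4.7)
The plan is to imitate the proof of Theorem \ref{th100} line by line, with the positive operator $\alpha|A|^2+(1-\alpha)|A^*|^2$ replaced throughout by
$$M_\alpha:=\alpha\left(\frac{|A|+|A^*|}{2}\right)^2+(1-\alpha)|A|^2,$$
which is positive for every $\alpha\in[0,1]$. The sole required input, playing the role of (\ref{eqn100}), is the $\gamma_2$-branch of (\ref{imp4}) from \cite{BP}, namely $w^2(A)\le\|M_\alpha\|$ for all $\alpha\in[0,1]$.

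The first step is to apply Lemma \ref{lem12} to the two positive operators $M_\alpha$ and $w^2(A)I$. Since $w^2(A)I$ is a nonnegative scalar multiple of the identity, one trivially has $\|M_\alpha\cdot w^2(A)I\|=w^2(A)\|M_\alpha\|=\|M_\alpha\|\,\|w^2(A)I\|$, so Lemma \ref{lem12} yields the identity $\|M_\alpha+w^2(A)I\|=\|M_\alpha\|+w^2(A)$. Combining this with $w^2(A)\le\|M_\alpha\|$ gives
$$w^2(A)\le\tfrac{1}{2}\|M_\alpha+w^2(A)I\|.$$
Applying the non-negative increasing function $f$ and using that $\|f(X)\|=f(\|X\|)$ for positive $X$ (by the spectral theorem, since $f$ is increasing) turns this into
$$f(w^2(A))\le\left\|f\!\left(\frac{M_\alpha+w^2(A)I}{2}\right)\right\|.$$

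Next, I would invoke the sandwich inequality (\ref{2.15}) with the two positive operators $M_\alpha$ and $w^2(A)I$ in place of $A$ and $D$. Its lower half immediately produces the middle quantity claimed in the theorem, while its upper half bounds that quantity by $\tfrac{1}{2}\|f(M_\alpha)+f(w^2(A))I\|$. A second application of Lemma \ref{lem12} (again legitimate because the second summand is a scalar operator) splits this last norm as $\tfrac{1}{2}\|f(M_\alpha)\|+\tfrac{1}{2}f(w^2(A))$. Finally, the direct analogue of Lemma \ref{lem10} for $M_\alpha$, obtained from $w^2(A)\le\|M_\alpha\|$ via $f(w^2(A))\le f(\|M_\alpha\|)=\|f(M_\alpha)\|$, lets me absorb the term $\tfrac{1}{2}f(w^2(A))$ into the left-hand side, leaving the desired outer upper bound $\|f(M_\alpha)\|$.

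No genuine obstacle is anticipated: the argument is a mechanical transcription of the proof of Theorem \ref{th100}. The only points needing verification are the two applications of Lemma \ref{lem12}, but both hypotheses hold automatically because one operand in each pair — first $w^2(A)I$, then $f(w^2(A))I$ — is a nonnegative scalar multiple of the identity, making the product-norm-factorization trivial.
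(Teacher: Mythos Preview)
Your proposal is correct and follows exactly the paper's own approach: the paper simply cites \cite[Cor.~2.15]{BP} for $w^2(A)\le\|M_\alpha\|$ (precisely the $\gamma_2$-branch of (\ref{imp4}) you identify) and then says to proceed as in Theorem~\ref{th100}. Your detailed transcription of that argument, including the two uses of Lemma~\ref{lem12} and the Hermite--Hadamard sandwich (\ref{2.15}), is faithful to Theorem~\ref{th100}'s proof.
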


\begin{proof}
Following \cite[Cor. 2.15]{BP} we have,
\begin{eqnarray}\label{100b}
w^2(A) &\leq&  \left\| \alpha \left(\frac{|A|+|A^*|}{2}\right)^2 +(1-\alpha)|A|^2 \right\|, ~~\forall \alpha \in [0,1].
\end{eqnarray}
Proceeding similarly  as in  Theorem \ref{th100} we get the required inequality.
\end{proof}

Considering $f(t)=t^2$ in Theorem \ref{th102}, we get the following corollary.

\begin{cor}\label{cor2b}
Let $A\in \mathcal{B}(\mathcal{H})$. Then
\begin{eqnarray*}
w^2(A) &\leq & \left\| \int_0^1  \left( (1-t)\left(\alpha \left(\frac{|A|+|A^*|}{2}\right)^2+(1-\alpha) |A|^2\right)+tw^2(A)I \right)^2dt  \right\|^{1/2}  \\
&\leq&  \left\| \alpha \left(\frac{|A|+|A^*|}{2}\right)^2 +(1-\alpha)|A|^2 \right\|, ~~\forall \alpha \in [0,1].
\end{eqnarray*}

\end{cor}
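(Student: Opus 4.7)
The plan is to apply Theorem \ref{th102} with the specific choice $f(t) = t^2$ and then take square roots. First I verify the hypotheses: the function $f(t) = t^2$ is non-negative and monotonically increasing on $[0,\infty)$, and it is operator convex there, since $t^r$ is operator convex on $[0,\infty)$ whenever $1 \leq r \leq 2$ (as recalled in the introduction of the paper).

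For brevity write $T_\alpha := \alpha \bigl(\frac{|A|+|A^*|}{2}\bigr)^2 + (1-\alpha)|A|^2$. As a convex combination of two positive operators, $T_\alpha$ is positive, hence self-adjoint, and so is the integrand $(1-t)T_\alpha + tw^2(A)I$ at each $t \in [0,1]$. Substituting $f(t) = t^2$ into the conclusion of Theorem \ref{th102} gives
\begin{equation*}
w^4(A) \leq \left\| \int_0^1 \bigl( (1-t) T_\alpha + t\, w^2(A)\, I \bigr)^2\, dt \right\| \leq \left\| T_\alpha^{\,2} \right\|.
\end{equation*}

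Taking the (monotone) square root on all three non-negative quantities yields
\begin{equation*}
w^2(A) \leq \left\| \int_0^1 \bigl( (1-t) T_\alpha + t\, w^2(A)\, I \bigr)^2\, dt \right\|^{1/2} \leq \bigl\| T_\alpha^{\,2} \bigr\|^{1/2}.
\end{equation*}
For the rightmost term I use that $T_\alpha \geq 0$, so $T_\alpha^{\,2}$ is positive with $\|T_\alpha^{\,2}\| = \|T_\alpha\|^2$, giving $\|T_\alpha^{\,2}\|^{1/2} = \|T_\alpha\|$. Unfolding the definition of $T_\alpha$ recovers the exact form stated in the corollary, and the bound holds for every $\alpha \in [0,1]$ since this was already true at the level of Theorem \ref{th102}.

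There is no real obstacle: the argument is a clean specialization of Theorem \ref{th102}, and the only non-cosmetic step is the elementary identity $\|T^2\| = \|T\|^2$ for self-adjoint $T$, which allows the norm on the right to be pulled out of the square.
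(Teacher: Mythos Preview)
Your proof is correct and follows exactly the paper's approach: the corollary is obtained by specializing Theorem \ref{th102} to $f(t)=t^2$ and taking square roots, with the only extra detail being the identity $\|T_\alpha^2\|^{1/2}=\|T_\alpha\|$ for the positive operator $T_\alpha$.
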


We next prove the following theorem.

\begin{theorem}\label{th101}
Let $A\in \mathcal{B}(\mathcal{H})$ and let $f$ be non-negative increasing operator convex function on $[0,\infty)$. Then
\begin{eqnarray*}
f(w^2(A)) &\leq & \left\| \int_0^1 f \left( (1-t)\left(\alpha \left(\frac{|A|+|A^*|}{2}\right)^2+(1-\alpha) |A^*|^2\right)+tw^2(A)I \right)dt  \right\|  \\
&\leq&  \left\| f\left(\alpha \left(\frac{|A|+|A^*|}{2}\right)^2+(1-\alpha) |A^*|^2\right)\right\|, ~~\forall \alpha \in [0,1].
\end{eqnarray*}
\end{theorem}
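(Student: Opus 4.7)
The plan is to follow the template used in the proofs of Theorems \ref{th100} and \ref{th102} essentially verbatim, with the positive operator
\[
M := \alpha\left(\frac{|A|+|A^*|}{2}\right)^2 + (1-\alpha)|A^*|^2
\]
in place of the operator appearing there. The only substantive input that needs to be supplied afresh is the base estimate $w^2(A) \le \|M\|$; once this is in hand, the remainder of the argument transfers line by line.

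To obtain the base estimate, I would apply inequality (\ref{100b}) from \cite[Cor. 2.15]{BP} with $A$ replaced by $A^*$. Because $w(A^*)=w(A)$ and the substitution swaps $|A|$ with $|A^*|$ while leaving the symmetric expression $\left(\frac{|A|+|A^*|}{2}\right)^2$ invariant, the $|A|^2$ summand in (\ref{100b}) is converted into $|A^*|^2$, yielding
\[
w^2(A) \;\le\; \left\| \alpha\left(\frac{|A|+|A^*|}{2}\right)^2 + (1-\alpha)|A^*|^2 \right\| \;=\; \|M\|, \qquad \forall \alpha\in[0,1].
\]

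With this base estimate, I would then proceed exactly as in Theorem \ref{th100}. First, since $\|M\cdot w^2(A) I\| = \|M\|\, w^2(A)$, Lemma \ref{lem12} gives $\|M + w^2(A) I\| = \|M\| + w^2(A) \ge 2\,w^2(A)$, so $w^2(A) \le \tfrac12 \|M + w^2(A) I\|$. Applying the non-negative increasing operator convex function $f$ and using the identity $f(\|X\|) = \|f(X)\|$ for positive $X$ (which holds since $\|X\|$ equals the spectral radius for positive operators), followed by the Hermite--Hadamard type norm inequality (\ref{2.15}), I obtain the chain
\[
f(w^2(A)) \;\le\; \left\| f\!\left(\tfrac{M + w^2(A) I}{2}\right)\right\| \;\le\; \left\|\int_0^1 f\bigl((1-t)M + t\,w^2(A) I\bigr)\, dt\right\| \;\le\; \tfrac12 \bigl\|f(M) + f(w^2(A)) I\bigr\|.
\]
Finally, since $f(M)$ and $f(w^2(A)) I$ are positive with $\|f(M)\cdot f(w^2(A)) I\| = \|f(M)\|\, f(w^2(A))$, Lemma \ref{lem12} gives $\|f(M) + f(w^2(A)) I\| = \|f(M)\| + f(w^2(A))$. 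The base estimate together with monotonicity of $f$ also yields $f(w^2(A)) \le f(\|M\|) = \|f(M)\|$ (the analogue of Lemma \ref{lem10}), which after rearrangement absorbs the $\tfrac12 f(w^2(A))$ term and closes the chain at $\|f(M)\|$.

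I do not anticipate any real obstacle: the derivation of the base estimate by the $A \mapsto A^*$ substitution is immediate, and the remaining steps are a direct transcription of the argument already used for Theorem \ref{th100}. The only point requiring mild care is ensuring that each invocation of Lemma \ref{lem12} is applied to a pair of positive operators whose product attains the product of norms, which is automatic here because one factor is always a nonnegative scalar multiple of the identity.
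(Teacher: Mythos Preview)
Your proposal is correct and follows essentially the same approach as the paper. The paper simply cites \cite[Cor.~2.15]{BP} directly for the base estimate (\ref{100a}) rather than deriving it from (\ref{100b}) via the substitution $A\mapsto A^*$, but this is a cosmetic difference; the remainder of your argument is a faithful transcription of the proof of Theorem~\ref{th100}, exactly as the paper indicates.
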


\begin{proof}
Following \cite[Cor. 2.15]{BP} we have,
\begin{eqnarray}\label{100a}
w^2(A) &\leq&  \left\| \alpha \left(\frac{|A|+|A^*|}{2}\right)^2 +(1-\alpha)|A^*|^2 \right\|, ~~\forall \alpha \in [0,1].
\end{eqnarray}
The proof then follows by using the inequality (\ref{100a}) and proceeding similarly as in Theorem \ref{th100}.
\end{proof}

By considering $f(t)=t^2$ in Theorem \ref{th101}, we get the following corollary.

\begin{cor}\label{cor2}
Let $A\in \mathcal{B}(\mathcal{H})$. Then
\begin{eqnarray*}
w^2(A) &\leq & \left\| \int_0^1  \left( (1-t)\left(\alpha \left(\frac{|A|+|A^*|}{2}\right)^2+(1-\alpha) |A^*|^2\right)+tw^2(A)I \right)^2dt  \right\|^{1/2}  \\
&\leq&  \left\| \alpha \left(\frac{|A|+|A^*|}{2}\right)^2 +(1-\alpha)|A^*|^2 \right\|, ~~\forall \alpha \in [0,1].
\end{eqnarray*}
\end{cor}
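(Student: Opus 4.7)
The plan is to deduce Corollary \ref{cor2} as a direct specialization of Theorem \ref{th101} with the choice $f(t)=t^2$, so no new inequality needs to be established; the work is purely in verifying the hypotheses and simplifying the resulting expression.

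First I would note that $f(t)=t^{2}$ is a non-negative, increasing function on $[0,\infty)$ and, as stated in the introductory paragraph of Section~3, it is operator convex on $[0,\infty)$ (since $1\le r=2\le 2$). Hence $f$ satisfies every hypothesis needed to invoke Theorem \ref{th101}. Plugging $f(t)=t^{2}$ into Theorem \ref{th101} gives, for all $\alpha\in[0,1]$,
\begin{eqnarray*}
w^{4}(A) &\leq& \left\| \int_0^1  \left( (1-t)\left(\alpha \left(\frac{|A|+|A^*|}{2}\right)^2+(1-\alpha) |A^*|^2\right)+tw^2(A)I \right)^2 dt  \right\|  \\
&\leq& \left\| \left(\alpha \left(\frac{|A|+|A^*|}{2}\right)^2+(1-\alpha) |A^*|^2\right)^{\!2}\,\right\|.
\end{eqnarray*}

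Next I would take the square root of the resulting chain of inequalities, using monotonicity of $t\mapsto\sqrt{t}$ on $[0,\infty)$. On the left this yields $w^{2}(A)$. On the right, since the operator $\alpha\big(\tfrac{|A|+|A^*|}{2}\big)^{2}+(1-\alpha)|A^*|^{2}$ is positive, I would use the standard identity $\|X^{2}\|=\|X\|^{2}$ for any positive $X\in\mathcal{B}(\mathcal{H})$ to rewrite the right-hand side as $\big\|\alpha\big(\tfrac{|A|+|A^*|}{2}\big)^{2}+(1-\alpha)|A^*|^{2}\big\|$. The middle term becomes its own square root, matching precisely the middle term displayed in Corollary \ref{cor2}.

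There is essentially no real obstacle here, since Theorem \ref{th101} has already done the analytic work. The only point to handle with a little care is the simplification on the right-hand side, namely the use of $\|X^{2}\|=\|X\|^{2}$ for a positive operator $X$; this is immediate from the spectral radius formula, but it is the one step that must be explicitly invoked to match the stated form of the corollary. Everything else is formal substitution.
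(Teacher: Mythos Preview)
Your proposal is correct and matches the paper's own approach: the corollary is obtained simply by specializing Theorem~\ref{th101} to $f(t)=t^{2}$ and then taking square roots, using $\|X^{2}\|=\|X\|^{2}$ for the positive operator on the right. The paper states exactly this (``By considering $f(t)=t^2$ in Theorem~\ref{th101}'') without further elaboration, so your write-up is, if anything, more detailed than the original.
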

In particular, for $\alpha=1$
\begin{eqnarray*}
w^2(A) \leq  \left\| \int_0^1  \left( (1-t) \left(\frac{|A|+|A^*|}{2}\right)^2 +tw^2(A)I \right)^2dt  \right\|^{1/2} \leq  \left\| \frac{|A|+|A^*|}{2}  \right\|^2.
\end{eqnarray*}
This inequality can be written in the following form:
\begin{eqnarray}\label{50b}
w^2(A) &\leq& \frac{1}{\sqrt{3}} \left\|  \left(\frac{|A|+|A^*|}{2}\right)^4 +w^4(A)I + w^2(A)\left(\frac{|A|+|A^*|}{2}\right)^2 \right\|^{1/2} \\
&\leq&  \left\| \frac{|A|+|A^*|}{2}  \right\|^2.
\end{eqnarray}

\begin{remark}
We note that the inequality (\ref{50b}) is sharper than the inequality (\ref{imp2}), obtained Kittaneh \cite[Th. 1]{K03}. The inequality (\ref{50a}) follows from  the inequality (\ref{50b}) by using the operator convexity of the function $f(t)=t^2$. So, the inequality (\ref{50b}) is also a refinement of the inequality (\ref{imp1}), obtained by Kittaneh \cite[Th. 1]{kittaneh1}. 
\end{remark}

Next we prove the following norm inequality.

\begin{theorem}\label{norm}
Let $A,D\in \mathcal{B}(\mathcal{H})$ be positive and let $f$ be non-negative increasing operator convex function on $[0,\infty)$. Then
\begin{eqnarray*}
f(\|AD\|) \leq  \left\| \int_0^1 f \left( (1-t) \left(\frac{A+D}{2}\right)^2 +t\|AD\|I \right)dt  \right\| \leq  \left\| f\left (\left(\frac{A+D}{2}\right)^2 \right)\right\|.
\end{eqnarray*}
\end{theorem}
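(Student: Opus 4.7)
The plan is to mirror the proof of Theorem \ref{th100}, with the role played there by inequality (\ref{eqn100}) replaced here by the Bhatia--Kittaneh inequality (\ref{bhatia}). Write $P := \left(\tfrac{A+D}{2}\right)^2$ and $Q := \|AD\|I$; both are positive operators. Since $Q$ is a non-negative scalar multiple of the identity, the equation $\|PQ\| = \|P\|\|Q\|$ is automatic, so Lemma \ref{lem12} gives $\|P+Q\| = \|P\| + \|AD\|$. Combined with (\ref{bhatia}), which asserts $\|AD\| \leq \|P\|$, this yields the key scalar estimate
\[ \|AD\| \leq \tfrac{1}{2}\|P+Q\|. \]

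Applying $f$ to this inequality, and using the spectral-mapping identity $f(\|T\|) = \|f(T)\|$ valid for positive $T$ and non-negative increasing $f$, I get
\[ f(\|AD\|) \leq f\!\left(\tfrac{1}{2}\|P+Q\|\right) = \left\|f\!\left(\tfrac{P+Q}{2}\right)\right\|. \]
Next, Lemma \ref{lem11} applied to the positive pair $P$, $Q$ yields the operator sandwich
\[ f\!\left(\tfrac{P+Q}{2}\right) \leq \int_0^1 f\!\left((1-t)P+tQ\right)dt \leq \tfrac{1}{2}\bigl(f(P)+f(Q)\bigr), \]
and taking norms preserves these inequalities since all three operators are positive. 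Because $f(Q)=f(\|AD\|)I$, a second application of Lemma \ref{lem12} gives $\|f(P)+f(Q)\| = \|f(P)\| + f(\|AD\|)$.

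Chaining the above bounds produces
\[ f(\|AD\|) \leq \left\|\int_0^1 f\!\left((1-t)P+tQ\right)dt\right\| \leq \tfrac{1}{2}\|f(P)\| + \tfrac{1}{2}f(\|AD\|). \]
The left inequality is precisely the desired lower bound of the theorem. To obtain the upper bound, monotonicity of $f$ together with (\ref{bhatia}) gives $f(\|AD\|) \leq f(\|P\|) = \|f(P)\|$; substituting this into the right-hand side above collapses it to $\|f(P)\| = \|f(((A+D)/2)^2)\|$, as required. I do not anticipate any serious obstacle: the argument is a disciplined combination of Lemmas \ref{lem11} and \ref{lem12} with (\ref{bhatia}), and the only mildly technical point is the spectral-mapping equality $f(\|T\|) = \|f(T)\|$ for positive $T$, which follows from $\sigma(f(T)) = f(\sigma(T))$ and the fact that a non-negative increasing $f$ attains its maximum on $\sigma(T) \subseteq [0,\|T\|]$ at the endpoint $\|T\|$.
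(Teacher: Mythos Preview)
Your proof is correct and follows essentially the same route as the paper: the paper's own proof simply says ``using (\ref{bhatia}) and proceeding similarly as in Theorem~\ref{th100},'' and what you have written is precisely that argument spelled out in detail, with the same use of Lemmas~\ref{lem11} and~\ref{lem12} and the same collapsing step at the end. Your explicit justification of $f(\|T\|)=\|f(T)\|$ for positive $T$ via spectral mapping is a nice touch that the paper leaves implicit.
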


\begin{proof}
Using the inequality (\ref{bhatia}) and proceeding similarly as in Theorem \ref{th100}, we get the required inequality.
\end{proof}

In particular, if we consider $f(t)=t^2$ in Theorem \ref{norm},  then we get the following corollary.

\begin{cor}\label{cor5norm}
Let $A,D\in \mathcal{B}(\mathcal{H})$ be positive. Then
\begin{eqnarray*}
\|AD\| \leq  \left\| \int_0^1  \left( (1-t) \left(\frac{A+D}{2}\right)^2 +t\|AD\|I \right)^2dt  \right\|^{1/2} \leq  \frac{1}{4}\left\| A+D \right\|^2.
\end{eqnarray*}

\end{cor}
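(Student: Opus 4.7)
The plan is to obtain Corollary \ref{cor5norm} as a direct specialization of Theorem \ref{norm}, so the proof reduces to verifying that the choice $f(t)=t^2$ is admissible and then simplifying the resulting right-hand side by a square-root.

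First I would check that $f(t)=t^2$ satisfies the hypotheses of Theorem \ref{norm}: it is non-negative and increasing on $[0,\infty)$, and as noted in the introduction of Section 3 (with $r=2$ in the range $1\le r\le 2$), the function $t^r$ is operator convex on $[0,\infty)$. Plugging $f(t)=t^2$ into the conclusion of Theorem \ref{norm} yields
\begin{eqnarray*}
\|AD\|^2 \leq  \left\| \int_0^1  \left( (1-t) \left(\frac{A+D}{2}\right)^2 +t\|AD\|I \right)^2 dt  \right\| \leq  \left\| \left(\frac{A+D}{2}\right)^4\right\|.
\end{eqnarray*}

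Next I would simplify the upper bound. Since $A$ and $D$ are positive, $(A+D)/2$ is a positive operator, so $P:=((A+D)/2)^2$ is positive selfadjoint, and for any positive operator one has $\|P^2\|=\|P\|^2$. Hence
\begin{eqnarray*}
\left\| \left(\frac{A+D}{2}\right)^4\right\| = \left\| \left(\frac{A+D}{2}\right)^2\right\|^2 = \left\| \frac{A+D}{2}\right\|^4 = \frac{1}{16}\|A+D\|^4.
\end{eqnarray*}

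Finally, taking square roots throughout the chain (permissible because all three quantities are non-negative, and $t\mapsto\sqrt{t}$ is monotone) gives
\begin{eqnarray*}
\|AD\| \leq  \left\| \int_0^1  \left( (1-t) \left(\frac{A+D}{2}\right)^2 +t\|AD\|I \right)^2 dt  \right\|^{1/2} \leq  \frac{1}{4}\|A+D\|^2,
\end{eqnarray*}
which is exactly the statement of the corollary. There is essentially no obstacle; the only things to watch are the operator convexity of $t^2$ and the fact that the middle expression is itself a positive operator, so taking the norm and then the square root is unambiguous.
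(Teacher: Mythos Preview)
Your proof is correct and follows exactly the paper's approach: the corollary is obtained from Theorem \ref{norm} by taking $f(t)=t^2$, and you have merely filled in the routine details (operator convexity of $t^2$, the identity $\|P^2\|=\|P\|^2$ for positive $P$, and taking square roots) that the paper leaves implicit.
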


This inequality can be written in the following form:
\begin{eqnarray*}
\|AD\| \leq \frac{1}{\sqrt{3}}\left\| \left(\frac{A+D}{2}\right)^4 +\|AD\|^2I + \|AD\|\left(\frac{A+D}{2}\right)^2 \right\|^{1/2} \leq  \frac{1}{4}\left\| A+D \right\|^2.
\end{eqnarray*}

\begin{remark}
Clearly, the inequality in Corollary \ref{cor5norm} improves on the inequality (\ref{bhatia}), obtained by Bhatia and Kittaneh \cite{BK2}.

\end{remark}

The final result of this paper is an improvement of the norm inequality (\ref{300c}), obtained by Bhatia and Kittaneh   \cite{BK}. 
\begin{theorem}\label{th300}
Let $A,D\in \mathcal{B}(\mathcal{H})$ and let $f$ be non-negative increasing operator convex function on $[0,\infty)$. Then
\begin{eqnarray*}
f(\|AD^*\|) \leq  \left\| \int_0^1 f \left( (1-t) \left(\frac{|A|^2+|D|^2}{2}\right) +t\|AD^*\|I \right)dt  \right\| \leq  \left\| f\left(\frac{|A|^2+|D|^2}{2}\right) \right\|.
\end{eqnarray*}

\end{theorem}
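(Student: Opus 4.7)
The plan is to run the proof of Theorem \ref{th100} essentially verbatim, with the Bhatia--Kittaneh inequality (\ref{300c}) playing the role previously filled by (\ref{eqn100}). Abbreviate $P:=\tfrac{|A|^2+|D|^2}{2}$ and $\mu:=\|AD^*\|$, so that (\ref{300c}) reads $\mu\le\|P\|$; note that both $P$ and $\mu I$ are positive, hence have spectra in $[0,\infty)$, so Lemma \ref{lem11} in its norm form (\ref{2.15}) applies.

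First I would establish the intermediate bound $\mu\le\tfrac12\|P+\mu I\|$. Since $P\cdot\mu I=\mu P$, we have $\|P\cdot\mu I\|=\mu\|P\|=\|P\|\cdot\|\mu I\|$, and Lemma \ref{lem12} converts this product identity into the sum identity $\|P+\mu I\|=\|P\|+\mu$; combined with $\mu\le\|P\|$ this produces the claimed bound. Using that $f$ is non-negative and increasing and that $\tfrac{P+\mu I}{2}$ is positive with norm $\tfrac12\|P+\mu I\|$, it follows that
\[
f(\mu)\le f\!\left(\tfrac12\|P+\mu I\|\right)\le \left\|f\!\left(\tfrac{P+\mu I}{2}\right)\right\|.
\]
Applying the left half of (\ref{2.15}) with the selfadjoint operators $P$ and $\mu I$ then yields the first inequality of the theorem.

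For the second inequality I would invoke the right half of (\ref{2.15}):
\[
\left\|\int_0^1 f\bigl((1-t)P+t\mu I\bigr)\,dt\right\|\le\tfrac12\bigl\|f(P)+f(\mu)I\bigr\|.
\]
Because $f\ge 0$, the operators $f(P)$ and $f(\mu)I$ are positive, and their product has norm $f(\mu)\|f(P)\|=\|f(P)\|\cdot\|f(\mu)I\|$; a second application of Lemma \ref{lem12} collapses the right-hand side to $\tfrac12\bigl(\|f(P)\|+f(\mu)\bigr)$. Finally $\mu\le\|P\|$ together with the monotonicity of $f$ gives $f(\mu)\le f(\|P\|)=\|f(P)\|$, so the sum is bounded by $\|f(P)\|$, closing the chain. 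The only delicate point is the double invocation of Lemma \ref{lem12}, which however is immediate here because one summand is always a non-negative scalar multiple of the identity and so the relevant norm--product identity holds trivially; the rest is a line-by-line translation of the argument used for Theorem \ref{th100}.
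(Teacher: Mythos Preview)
Your proposal is correct and follows exactly the same route as the paper: the paper's proof of Theorem~\ref{th300} consists of the single sentence ``Using the inequality (\ref{300c}) and proceeding similarly as in Theorem~\ref{th100}, we get the required inequality,'' and what you have written is precisely that---a faithful transcription of the Theorem~\ref{th100} argument with $P=\tfrac{|A|^2+|D|^2}{2}$ and $\mu=\|AD^*\|$ in place of $\alpha|A|^2+(1-\alpha)|A^*|^2$ and $w^2(A)$, and with (\ref{300c}) replacing (\ref{eqn100}). Your two invocations of Lemma~\ref{lem12} and of the norm form (\ref{2.15}) of Lemma~\ref{lem11} mirror the paper's exactly.
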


\begin{proof}

Using the inequality (\ref{300c}) and proceeding similarly as in Theorem \ref{th100}, we get the required inequality.
\end{proof}

In particular, if we consider $f(t)=t^2$ in Theorem \ref{th300},  then we get the following corollary.

\begin{cor}\label{cor5}
Let $A,D\in \mathcal{B}(\mathcal{H})$. Then
\begin{eqnarray*}
\|AD^*\| \leq  \left\| \int_0^1  \left( (1-t) \left(\frac{|A|^2+|D|^2}{2}\right) +t\|AD^*\|I \right)^2dt  \right\|^{1/2} \leq  \frac{1}{2}\left\| |A|^2+|D|^2 \right\|.
\end{eqnarray*}

\end{cor}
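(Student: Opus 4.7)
The plan is to obtain Corollary \ref{cor5} as the specialization of Theorem \ref{th300} to the operator convex function $f(t) = t^{2}$, which I treat as already established. The hypotheses of Theorem \ref{th300} for this $f$ are immediate: $t^{2}$ is non-negative and increasing on $[0,\infty)$, and it is operator convex on $[0,\infty)$ (the case $r=2$ of the family $t^{r}$, $1\leq r\leq 2$, explicitly recorded at the start of Section 3).

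Substituting $f(t)=t^{2}$ into the conclusion of Theorem \ref{th300} produces
\[ \|AD^*\|^{2} \leq \left\| \int_0^1 \left( (1-t)\tfrac{|A|^2+|D|^2}{2} + t\|AD^*\|I \right)^{2} dt \right\| \leq \left\| \left(\tfrac{|A|^2+|D|^2}{2}\right)^{2} \right\|. \]
The second step is to simplify the rightmost norm. Since $\tfrac{1}{2}(|A|^{2}+|D|^{2})$ is a positive operator, the continuous functional calculus (or simply $\sigma(X^{2})=\{\lambda^{2} : \lambda \in \sigma(X)\}$ for positive $X$) gives $\|X^{2}\|=\|X\|^{2}$; hence the outer right term equals $\tfrac{1}{4}\bigl\||A|^{2}+|D|^{2}\bigr\|^{2}$.

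Taking positive square roots throughout the resulting chain (all three quantities are non-negative) yields
\[ \|AD^*\| \leq \left\| \int_0^1 \left((1-t)\tfrac{|A|^2+|D|^2}{2} + t\|AD^*\|I\right)^{2} dt \right\|^{1/2} \leq \tfrac{1}{2}\bigl\||A|^2+|D|^2\bigr\|, \]
which is exactly the statement of Corollary \ref{cor5}.

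The only genuine obstacle is hidden inside Theorem \ref{th300} itself, whose proof follows the template of Theorem \ref{th100}: one starts from the Bhatia--Kittaneh inequality (\ref{300c}) in the form $\|AD^*\|\leq \bigl\|\tfrac{|A|^{2}+|D|^{2}}{2}\bigr\|$, then exploits the trivial identity $\bigl\|X\cdot \|AD^*\|I\bigr\|=\|X\|\,\|AD^*\|$ (with $X=\tfrac{1}{2}(|A|^{2}+|D|^{2})$) together with Lemma \ref{lem12} to upgrade this to the sum-norm identity $\|X+\|AD^*\|I\|=\|X\|+\|AD^*\|$, whence $\|AD^*\|\leq \tfrac12\|X+\|AD^*\|I\|$, and finally invokes the Hermite--Hadamard-type norm inequality (\ref{2.15}) applied to the positive operators $X$ and $\|AD^*\|I$ to squeeze the integral term between $f(\|AD^*\|)$ and $\|f(X)\|$. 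Granted this underlying machinery, the deduction of Corollary \ref{cor5} above is a pure computation with no additional operator-theoretic input.
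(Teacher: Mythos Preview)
Your proof is correct and follows exactly the paper's approach: the corollary is obtained by taking $f(t)=t^{2}$ in Theorem~\ref{th300}, simplifying $\bigl\|(\tfrac{|A|^{2}+|D|^{2}}{2})^{2}\bigr\|=\tfrac14\|\,|A|^{2}+|D|^{2}\,\|^{2}$, and taking square roots. Your recap of the mechanism behind Theorem~\ref{th300} is accurate but not needed for the corollary itself.
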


This inequality can be written in the following form:
\begin{eqnarray*}
\|AD^*\| &\leq & \frac{1}{\sqrt{3}}\left\| \left(\frac{|A|^2+|D|^2}{2}\right)^2 +\|AD^*\|^2I + \|AD^*\|\left(\frac{|A|^2+|D|^2}{2}\right) \right\|^{1/2} \\
&\leq & \frac{1}{2}\left\| A^*A+D^*D \right\|.
\end{eqnarray*}

\begin{remark}
We would like to remark that the inequality in Corollary \ref{cor5} refines the  inequality (\ref{300c}) obtained by Bhatia and Kittaneh \cite{BK}.  Consider  $A=\left(\begin{array}{cc}
0 & 2\\
0 & 0
\end{array}\right)$ and $D=\left(\begin{array}{cc}
1 & 0\\
0 & 1
\end{array}\right)$. Then by elementary calculations we get,
\[\frac{1}{\sqrt{3}}\left\| \left(\frac{|A|^2+|D|^2}{2}\right)^2 +\|AD^*\|^2I + \|AD^*\|\left(\frac{|A|^2+|D|^2}{2}\right) \right\|^{1/2}=\sqrt{\frac{61}{12}}\approx 2.2546\] and \,\,\,\,$\frac{1}{2}\left\| A^*A+D^*D \right\|=\frac{5}{2}.$ This shows that  the inequality obtained in Corollary \ref{cor5} is a proper refinement of the inequality (\ref{300c}).
\end{remark}

\bibliographystyle{amsplain}

\begin{thebibliography}{99}
	

\bibitem{bag} S. Bag, P. Bhunia  and K. Paul, Bounds of numerical radius of bounded linear operator using $t$-Aluthge transform, Math. Inequal. Appl.  23(3) (2020) 991-1004.	

\bibitem{Bh} R. Bhatia, Matrix Analysis, Springer, Graduate Texts in Mathemartics, ISBN 0-387-94846-5.

\bibitem{BK2} R. Bhatia and F. Kittaneh, Notes on matrix arithmetic-geometric mean inequalities, Linear Algebra Appl. 308 (2000) 203-211.


\bibitem{BK} R. Bhatia and F. Kittaneh, On the singular values of a product of operators. SIAM J. Matrix Anal. Appl. 11 (1990), 272-277.


\bibitem{BBP3} P. Bhunia, S. Bag and K. Paul, Numerical radius inequalities and its applications in estimation of zeros of polynomials, Linear Algebra Appl. 573 (2019) 166-177.

\bibitem{BPN} P. Bhunia, K. Paul and R.K. Nayak, Sharp inequalities for the numerical radius of Hilbert space operators and operator matrices, Math. Inequal. Appl. (2020), Accepted. 


\bibitem{BBP4} P. Bhunia, S. Bag and K. Paul, Bounds for zeros of a polynomial using numerical radius of Hilbertian space operators, (2020), arXiv:1906.07363v2 [math.FA].


\bibitem{BP} P. Bhunia and K. Paul, Proper improvement of well-known numerical radius inequalities and their applications, (2020),	arXiv:2009.03206 [math.FA].
	



\bibitem{buzano} M.L. Buzano, Generalizzatione della diseguaglianza di Cauchy-Schwarz, Rend. Sem. Mat. Univ. e Politech. Torino 31(1971/73) (1974) 405-409.


\bibitem{D} S.S. Dragomir, Hermite-Hadamard's type inequalities for operator convex functions, Appl. Math. Comput. 218 (3) (2011) 766-772.


\bibitem{DP} K. Davidson and S.C. Power, Best approximation in $\mathbb{C}^*$-algebras, J. Reine Angew. Math. 368 (1986) 43-62.



\bibitem{kittaneh1}  F. Kittaneh, Numerical radius inequalities for Hilbert space operators, Studia Math. 168(1) (2005), 73-80.

\bibitem{K03}  F. Kittaneh, Numerical radius inequality and an estimate for the numerical radius of the Frobenius companion matrix, Studia Math. 158(1) (2003), 11-17. 

\bibitem{K02} F. Kittaneh, Norm inequalities for sums of positive operators, J. Oper. Theory 48 (2002) 95-103.


\bibitem{OM} M.E. Omidvar and H.R. Moradi, Better bounds on the numerical radii of Hilbert space operators, Linear Algebra Appl. 604 (2020) 265-277.




\end{thebibliography}

\end{document}